\title[Endless continuability and convolution product]{Endless
  continuability and convolution product}  
\author{Yafei Ou}
\address{SJTU-ParisTech Elite Institute of Technology,
800 Dongchuan Rd, Shanghai 200240, P.R.China.}
\email{oyafei@sjtu.edu.cn}
\urladdr{}
\author{Eric Delabaere}
\address{Universit\'e d'Angers, laboratoire LAREMA UMR CNRS 6093,
Universit\'e d'Angers, 2 Boulevard Lavoisier, 49045 Angers Cedex 01,
France.}
\email{eric.delabaere@univ-angers.fr}
\urladdr{}
\theoremstyle{definition}
\newcommand{\zetadot}{ {\stackrel{\bullet}{\zeta}} }
\newcommand{\xidot}{ {\stackrel{\bullet}{\xi}} }
\newcommand{\omegadot}{ {\stackrel{\bullet}{\omega}} }
\newcommand{\thetadot}{ {\dot{\theta}} }
\newcommand{\sect}{ {\boldsymbol{\mathfrak{s}}} }
\begin{document}
\frontmatter
 
\begin{abstract}
We provide a rigorous analysis for the so-called endlessly continuable
germs of holomorphic functions or in other words, the Ecalle's
resurgent functions. We follow and complete an approach due to 
 Pham,  based on the notion of discrete
filtered set $\Omega_\star$  and the associated Riemann surface defined
as the space of $\Omega_\star$-homotopy classes of paths. Our main
contribution consists in a complete though simple proof of the stability under
convolution product of the space of endlessly continuable germs.
\end{abstract}

%\begin{altabstract}
%\end{altabstract}

\keywords{Asymptotics, Resurgent algebras, Convolution product}

\subjclass{34M37, 30Hxx, 30D05, 37F99}

%\keywords{Sommation de Borel, s\'eries de factorielles.}
%\altkeywords{Borel-resummation, factorial series.}
%\subjclass{30E15, 40Gxx} 
 
\maketitle
\tableofcontents 

\mainmatter
 
\section{Introduction}\label{chapt5}

Today, the resurgence theory of Ecalle has demonstrated its
efficiency in many instances for dealing with divergent series arising from differential
and difference equations, PDEs, semiclassical analysis,
etc.. see for instance \cite{Ec84, DP99, Kawai-96, Kawai-004, Cost009,
  Del014, S014} and references
therein. In particular, this theory provides the necessary tools for
understanding the nonlinear Stokes phenomena in asymptotic analysis
and leads up to both theoretical results and even numerical methods,
see e.g. \cite{DH002}.

In its simplest form, the Ecalle's theory  generalizes the Borel
resummation theory, whose main objects are 1-Gevrey formal series
$\displaystyle \widetilde{\varphi}(z) = \sum_{n \geq 1} \frac{a_n}{z^n} \in
\mathbb{C}[[z^{-1}]]$, that is the formal Borel transform $\displaystyle  
\widehat{\varphi}(\zeta)=\sum_{n \geq 1} \frac{a_n}{(n-1)!} \zeta^n \in
\mathbb{C}\{\zeta\}$ defines a germ of holomorphic functions at
the origin. The formal series $\widetilde{\varphi}$ is said to be
resurgent if $\widehat{\varphi}$  is ``endlessly continuable''. 
Roughly speaking, $\widehat{\varphi}$ 
is endlessly continuable if $\widehat{\varphi}$  can be  analytically continued
along any path on $\mathbb{C}$  avoiding a set of possibly singular
points, each of them being locally isolated.
However, this set may be everywhere dense and is so for many important
applications, in particular those stemming from high energy physics,
string theory and related models \cite{DDP93, DDP97, Aniceto012,
  Marino009, Dunne014}

It turns out that the Cauchy product
$(\widetilde{\varphi}.\widetilde{\psi})(z)$ 
of 1-Gevrey formal series becomes the convolution
product $\displaystyle \widehat{\varphi} \ast \widehat{\psi}(\zeta)= \int_0^\zeta 
\widehat{\varphi}(\eta) \widehat{\psi}(\zeta-\eta) \, d\eta$
of germs of  holomorphic functions by Borel transformation. Since the Ecalle's theory
aims at analyzing nonlinear problems, it is an essential demand that the convolution
product preserves the notion of endless
continuability. This ends up with the definition of various subalgebras of resurgent
functions in the so-called Borel $\zeta$-plane and their counterpart in the
initial $z$-plane, as well as alien operators designed for encoding
the singularities by microlocalisation and describing the nonlinear
Stokes phenomena \cite{Ec85, Ec93-1, CNP1, S014, Del014}.

As a matter of fact, there exist  various  definitions  of  ``endless
continuability'' in the literature. 
The more general one is that of Ecalle \cite{Ec85,
  Ec93-1}. Another one is due to Pham {\em et al.} \cite{CNP2,
  CNP1}, is  easier to handle with and is based on 
the construction of a Riemann surface governed by the datum of a discrete filtered
set. This provides the notion of endless Riemann surface. A germ 
of holomorphic functions that can be analytically continued to such a
Riemann surface is by definition endlessly continuable. This is this
second approach that we use in this paper.

As already said, the endless continuability  and its stability under  convolution
product are  key-properties at the very root of the
resurgence theory. Unfortunately, the existing proofs for the
stability in its full generality \cite{Ec85, CNP1} are difficult and
subject to controversy.
Our main goal in this article is to show in a simple and rigorous way 
that the convolution product
of any two endlessly continuable functions is endlessly
continuable. Though inspired by \cite{CNP1}, our methods differ from
these authors  for key-arguments. The method that we present in this paper can be seen
as an extension of  ideas
detailed in \cite{OU010, S012, S014} for the case of holomorphic functions 
that can be analytically continued along any path avoiding closed discrete
subsets of $\mathbb{C}$. Consequently, we thought that our results were
worth the attention of the specialists in this field of research.

The paper is organized as follows.
We introduce the notion of discrete filtered
sets and their associated Riemann surfaces that we study properly (Sect. \ref{filsetR}). 
We define endless Riemann surfaces and
endlessly continuable germs of holomorphic functions
 and we make a link with 
the endlessly continuable functions of Ecalle (Sect. \ref{CNP-s-end}).
The main result of the paper concerns the stability under
convolution product and this is detailed in
Sect. \ref{mainsectionend}. We end the paper with some open problems.

\section{Discrete filtered set and associated Riemann
  surface}\label{filsetR} 
 
\subsection{Discrete filtered sets}

The following definitions are adapted   from \cite{CNP1}.

\begin{defi}\label{FiltDis}
A \textbf{discrete filtered set} $\Omega_\star$ centred at $\omega \in
\mathbb{C}$ is an 
increasing sequence of finite sets $\Omega_L \subset \mathbb{C}$, $ L
>0$,  such that :
\begin{itemize}
\item for any $L >0$, $\Omega_L$ belongs to the open disc centred at
  $\omega$ with radius $L$; 
\item if $L_1 \leq L_2$ then $\Omega_{L_1} \subseteqq \Omega_{L_2}$;
\item for $L>0$ small enough, $\Omega_L = \{\omega\}$.
\end{itemize}
For $L>0$, we denote $\Omega_L^\star = \Omega_L  \setminus
\{\omega\}$. The number  $\rho_{\Omega_\star}(\omega) = \sup \{L>0 \mid
\Omega_L^\star=\emptyset\}$ is called the \textbf{distance of
  $\omega$ to $\Omega_\star$}.
\end{defi}

\begin{defi}\label{UnionFiltDis}
Let  $\Omega_{\star}$ and $\Omega_{\star}^\prime$ be  two  discrete 
filtered sets centred at  $\omega \in \mathbb{C}$. Their 
 \textbf{union} $\Omega_\star \cup \Omega_\star^\prime$ is the discrete
filtered set centred at  $\omega$ defined by : for every $L>0$, $(\Omega_\star \cup
\Omega_\star^\prime)_L = \Omega_L \cup \Omega_L^\prime$. Their \textbf{sum}
$\Omega_\star + \Omega_\star^\prime$ is the filtered set centred at
$\omega$  defined by : for every $L>0$,
$(\Omega_\star + \Omega_\star^\prime)_L =  \{-\omega+\Omega_L + \Omega^\prime_L \}  \cap
D(\omega,L)$. Their \textbf{fine sum}
$\Omega_\star \ast \Omega_\star^\prime$ is the filtered set centred at
$\omega$ given by : for every $L>0$,
$(\Omega_\star \ast \Omega_\star^\prime)_L =  \{\zeta=
-\omega+ \omega_1+\omega_2 \mid \omega_1 \in \Omega_{L_1}, \omega_2
\in \Omega^\prime_{L_2}, L_1+L_2 =L \}$. 
\end{defi}

If $\Omega_\star$ is a discrete filtered set, we remark that
$\bigcup_{L>0} \Omega_L$ can be dense in $\mathbb{C}$ as it is shown
in the following example.

\begin{exem}
Assume that $\omega_1 \in \mathbb{C}^\star$ and define
\begin{itemize}
\item for any $L \in ]0, |\omega_1|]$, $\Omega_L = \{0\}$,
\item for any $n \in \mathbb{N}^\star$ and any $L \in
  ]n|\omega_1|, (n+1)|\omega_1|]$, $\Omega_L = \{0, \pm \omega_1, \cdots,
  \pm n \omega_1\}$.
\end{itemize}
This define a discrete filtered set $\Omega_{1\star}$ centred at $0$.

Assume now that $\omega_1, \omega_2, \omega_3 \in \mathbb{C}^\star$
are rationally independent, that is linearly independent over
$\mathbb{Z}$. We consider  the
three discrete filtered sets $\Omega_{1\star}$,  $\Omega_{2\star}$
and $\Omega_{3\star}$ centred at $0$ defined as above. We note
$\Omega_\star = \Omega_{1\star} + \Omega_{2\star}+ \Omega_{3\star}$ their
sum. Then $\bigcup_{L>0} \Omega_L$ is everywhere dense   in
$\mathbb{C}$. The conclusion is the same when $\Omega_\star =
\Omega_{1\star} \ast \Omega_{2\star} \ast \Omega_{3\star}$ is
defined by fine sums.
\end{exem}

For a given discrete filtered set $\Omega_\star$ centred at
$\omega$, its iterated fine sums ${\sum_{n \ast} \Omega_\star =
\renewcommand{\arraystretch}{0.5}
\begin{array}[t]{c}
\displaystyle \underbrace{\Omega_\star \ast \cdots \ast \Omega_\star}\\
{\scriptstyle n \mbox{ times}}
\end{array}
\renewcommand{\arraystretch}{1}}$ makes a direct system (for the
injections \linebreak ${\left(\sum_{n \ast} \Omega_\star\right)_L \hookrightarrow
\left(\sum_{(n+1) \ast} \Omega_\star\right)_L}$, for every $L>0$). The
fine sums enjoyes the following property (the proof is left to the reader):

\begin{prop}
Let $\Omega_\star$ be a discrete filtered set $\Omega_\star$ centred at
$\omega$. Then the direct limit $\Omega_\star^\infty = 
\displaystyle\lim_{\rightarrow}\sum_{n \ast}
\Omega_\star$ is a discrete filtered set at
$\omega$.
\end{prop}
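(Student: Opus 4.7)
The plan is to identify the direct limit $(\Omega_\star^\infty)_L$ with the nested union $\bigcup_{n \geq 1} (\sum_{n\ast}\Omega_\star)_L$ inside $\mathbb{C}$ (the transition maps being the injections asserted in the excerpt), and then check the three axioms of a discrete filtered set for this union. It will be convenient to first unfold the iterated fine sum: by induction on $n$, an element $\zeta \in (\sum_{n\ast}\Omega_\star)_L$ is exactly one that can be written as
\[
\zeta = -(n-1)\omega + \omega_1 + \cdots + \omega_n, \qquad \omega_i \in \Omega_{L_i},\ L_1 + \cdots + L_n = L.
\]

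From this representation, two of the three axioms fall out almost immediately. Since each $\omega_i$ sits in the open disc $D(\omega, L_i)$, one has $|\zeta - \omega| \leq \sum_i |\omega_i - \omega| < \sum_i L_i = L$, so $(\Omega_\star^\infty)_L \subseteq D(\omega,L)$. Monotonicity of the family $\{(\Omega_\star^\infty)_L\}_L$ is inherited from the monotonicity of each $\sum_{n\ast}\Omega_\star$. For the collapse axiom, if $L < \rho := \rho_{\Omega_\star}(\omega)$ then every $L_i$ in any admissible decomposition satisfies $L_i < \rho$, so $\Omega_{L_i} = \{\omega\}$, forcing $\omega_i = \omega$ for every $i$ and hence $\zeta = \omega$; thus $(\Omega_\star^\infty)_L = \{\omega\}$ whenever $L < \rho$.

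The real content, and I expect the only genuine obstacle, is finiteness of $(\Omega_\star^\infty)_L$ for each $L$, since a union over $n$ of increasing finite sets need not stay finite. The crux is to bound uniformly in $n$ the "effective" number of summands. Writing $\zeta - \omega = \sum_i (\omega_i - \omega)$, only the indices with $\omega_i \neq \omega$ contribute; and for each such index the definition of $\rho$ forces $L_i \geq \rho > 0$ (otherwise $\Omega_{L_i} = \{\omega\}$, contradicting $\omega_i \neq \omega$). The constraint $\sum L_i = L$ then bounds the number of non-trivial indices by $\lfloor L/\rho \rfloor$, independently of $n$. Moreover each such $\omega_i$ lies in $\Omega_{L_i} \subseteq \Omega_L$ by monotonicity (since $L_i \leq L$), and $\Omega_L$ is finite. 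Hence $(\Omega_\star^\infty)_L - \omega$ is contained in the set of all sums of at most $\lfloor L/\rho \rfloor$ elements drawn from the finite set $\Omega_L - \omega$, which is itself finite. This uniform control across $n$ is precisely what keeps the direct limit finite and completes the verification of all three axioms.
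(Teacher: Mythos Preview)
Your proof is correct. The paper itself omits the argument entirely (``the proof is left to the reader''), so there is nothing to compare against; your write-up supplies precisely the details one would expect, and the crucial observation---that any index $i$ with $\omega_i \neq \omega$ forces $L_i \geq \rho_{\Omega_\star}(\omega)$, hence at most $\lfloor L/\rho_{\Omega_\star}(\omega)\rfloor$ such indices can occur, uniformly in $n$---is exactly the mechanism that keeps $(\Omega_\star^\infty)_L$ finite.
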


\begin{defi}
The discrete filtered set $\Omega_\star^\infty$ is called the
\textbf{saturated} of $\Omega_\star$.
\end{defi}

\subsection{Reminder about paths}

In what follows, a path $\lambda$ in a topological space $X$ is any
continuous function $\lambda: [a,a+l] \to X$, where $[a,a+l] \subset
\mathbb{R}$ is a (compact) interval possibly 
reduced to~$\{a\}$. We often work with standard
paths\index{Path!standard path}, that is paths
defined on $[0,1]$. The path  ${ \underline{\lambda} : t \in [0,1] \mapsto
\lambda(a+tl)}$ is the standardized path\index{Path!standardized path}
of  $\lambda$. For  two paths 
$\lambda_1: [a,a+l] 
\to X$, $\lambda_2: [b,b+k] \to X$ so that
$\lambda_1(a+l)=\lambda_2(b)$, one defines their product (or
concatenation) by 
$${ \displaystyle \lambda_1 \lambda_2 : t \in [a,a+l+k]\mapsto 
\left\{
\begin{array}{l}
\lambda_1(t), \, t \in [a,a+l]\\
\lambda_2(t-a-l+b), \, t \in [a+l,a+l+k]
\end{array}
\right.} 
$$
 When the two paths $\lambda_1$, $\lambda_2$ have same extremities,
they are homotopic when there exists a continuous map $H : [0,1]\times
[0,1] \to X$ that realizes a homotopy between the standardized
paths\index{Path!product}
$\underline{\lambda}_1$ and $\underline{\lambda}_2$.
We recall that any path  $\lambda:I \to \mathbb{C}$
can be uniformaly approached by $\mathcal{C}^\infty$-paths.
When $\lambda : I \to \mathbb{C}$ is piecewise $\mathcal{C}^1$, we denote its 
length\index{Path!length} by 
${ \displaystyle \mathcal{L}_\lambda  }$.

\subsection{$\Omega_\star$-allowed path, $\Omega_\star$-homotopy}

The following definitions are inspired from  \cite{CNP1} but for
slight modifications\footnote{These definitions are less general than
  those of  \cite{CNP1} but sufficient in practice as far as we know.}.

\begin{defi}\label{Allpath}
For $\Omega_\star$  a discrete filtered set centred at
$\omega \in \mathbb{C}$,
one denotes by $\mathfrak{R}_{\Omega_\star}(L)$ the set of paths  $\lambda : I \rightarrow
\mathbb{C}$ starting from $\omega$ and such that :
\begin{itemize}
\item $\lambda$ is  $\mathcal{C}^1$ piecewise and its  length satisfies 
$\mathcal{L}_\lambda <L$;
\item $\lambda$ is the constant path or there exists $t_0 \in [0,1[$
  such that  $\underline{\lambda}([0,t_0])=\{ \omega
  \}$ and $\underline{\lambda}(]t_0,1]) \subset D(\omega,L) \setminus \Omega_{L}$.
\end{itemize}
A path $\lambda$ is said to be $\Omega_\star$-allowed if $\lambda \in
\mathfrak{R}_{\Omega_\star}(L)$ for some $L>0$.
We denote by ${ \mathfrak{R}_{\Omega_\star} = \bigcup_{L>0}
\mathfrak{R}_{\Omega_\star}(L) }$ the set of $\Omega_\star$-allowed paths.
\end{defi}

\begin{defi}\label{Allhomotopy}
Let  $\Omega_\star$ be a discrete filtered set centred at
$\omega \in \mathbb{C}$. A continuous map
$\displaystyle H : (s,t) \in  [0,1]^2 \mapsto H_t(s) \in \mathbb{C}$
is a $\Omega_\star$-homotopy if $H$ has 
a continuous partial derivative $\displaystyle \frac{\partial
H}{ \partial s}$ and, for every $t \in [0,1]$, the
path $H_t$ is $\Omega_\star$-allowed.

Two $\Omega_\star$-allowed paths $\lambda_0$ and $\lambda_1$ with same
extremities are $\Omega_\star$-homotopic
when there exists a $\Omega_\star$-homotopy that realises a homotopy
between the standardized paths 
$\underline{\lambda}_0$ and  $\underline{\lambda}_1$.\\
For $\lambda$ a $\Omega_\star$-allowed path, we denote by $\mathrm{cl}(\lambda)$ its
equivalence class for the relation $\sim_{\Omega_\star}$ of  
$\Omega_\star$-homotopy  of paths in  $\mathfrak{R}_{\Omega_\star}$ with fixed
extremities.
\end{defi}

It is quite important to understand what is the
$\Omega_\star$-homotopy and we make the following remark that we
formulate as a lemma:

\begin{lemm}\label{RemarksurHomotopy}
Assume that $\Omega_\star$ be a discrete filtered set centred at
$\omega \in \mathbb{C}$ and let $H$ be a $\Omega_\star$-homotopy.
Then there exists a good\footnote{By ``good'', we mean that the
covering has finite  elements, that each of these element $I_i$ is a
connected interval and that there are no 3-by-3 intersections.} 
 open covering $(I_i)_{0 \leq i \leq n}$ of $[0,1]$ 
and  real positive numbers $L_0, L_1, \cdots , L_n$
such that, 
\begin{itemize}
\item for every $i = 0, \cdots, n$ and every $t \in I_i$, 
$H_t$ belongs to $\mathfrak{R}_{\Omega_\star}(L_i)$.
\item for every $i = 0, \cdots, n-1$ and for every $t \in I_i \cap
  I_{i+1}$, $H_t \in \mathfrak{R}_{\Omega_\star}(L_i) \cap
  \mathfrak{R}_{\Omega_\star}(L_{i+1})$. 
\end{itemize}
\end{lemm}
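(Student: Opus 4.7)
The plan is to proceed in three steps: (i) for each $t^\ast \in [0,1]$, construct a neighborhood $J(t^\ast)$ of $t^\ast$ and a radius $L^\ast > 0$ such that $H_t \in \mathfrak{R}_{\Omega_\star}(L^\ast)$ for every $t \in J(t^\ast)$; (ii) extract a finite subcover of $[0,1]$ by compactness; (iii) refine this subcover into a good open covering, after which both bulleted conclusions follow essentially automatically.

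For step (i), I fix $t^\ast$ and pick $L^\ast := L(t^\ast)$, where $L(t^\ast) > 0$ is any radius witnessing that $H_{t^\ast}$ is $\Omega_\star$-allowed. The length $\mathcal{L}_{H_t} = \int_0^1 |\partial H / \partial s(s,t)| \, ds$ depends continuously on $t$, since $\partial H / \partial s$ is continuous on the compact square $[0,1]^2$; hence there is a neighborhood $J_1$ of $t^\ast$ on which $\mathcal{L}_{H_t} < L^\ast$, and consequently $H_t([0,1]) \subset \overline{D(\omega, \mathcal{L}_{H_t})} \subset D(\omega, L^\ast)$. For the avoidance condition, note that $\Omega_{L^\ast}^\star$ is finite, and each of its points $\omega'$ satisfies $\omega' \notin H_{t^\ast}([0,1])$, since $\omega' \neq \omega$ while $H_{t^\ast}(]s_0(t^\ast), 1])$ avoids $\Omega_{L^\ast}$ by hypothesis. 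The distance from $\omega'$ to the compact set $H_{t^\ast}([0,1])$ is therefore strictly positive, so uniform continuity of $H$ yields a neighborhood $J_2$ of $t^\ast$ on which $H_t([0,1]) \cap \Omega_{L^\ast}^\star = \emptyset$. The remaining obstacle $\omega \in \Omega_{L^\ast}$ is handled by the $\Omega_\star$-allowedness of each $H_t$ itself: it supplies some $s_0(t) \in [0,1[$ with $H_t([0, s_0(t)]) = \{\omega\}$ and $H_t(s) \neq \omega$ for $s \in \, ]s_0(t), 1]$. Taking $J(t^\ast) := J_1 \cap J_2$ completes this step.

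Steps (ii) and (iii) are standard: compactness of $[0,1]$ yields a finite subcover of the $J(t^\ast)$'s, and a standard procedure on the real line (ordering the resulting intervals by left endpoint, then shrinking or splitting them so that only consecutive pairs overlap) produces a good open covering $(I_i)_{0 \le i \le n}$ equipped with radii $L_0, \ldots, L_n$ inherited from step (i). The first listed conclusion of the lemma holds by construction, and the second is immediate: any $t \in I_i \cap I_{i+1}$ lies in both $I_i$ and $I_{i+1}$, so $H_t$ inherits both memberships.

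The delicate point is step (i), specifically the absence of any safety margin in the choice $L^\ast = L(t^\ast)$. Enlarging $L^\ast$ could enlarge the finite obstacle set $\Omega_{L^\ast}$ by introducing new points arbitrarily close to $H_{t^\ast}([0,1])$, which would destroy the very avoidance property one is trying to preserve. The key is therefore to freeze $\Omega_{L^\ast}$ by taking exactly $L^\ast = L(t^\ast)$, and to let the continuity of $\mathcal{L}_{H_t}$, rather than any enlargement of $L$, absorb the length fluctuations on a sufficiently small neighborhood.
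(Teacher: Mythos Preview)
Your argument is correct and follows essentially the same local-to-global strategy as the paper: find for each $t^\ast$ a level $L^\ast$ and a neighbourhood on which $H_t \in \mathfrak{R}_{\Omega_\star}(L^\ast)$ via continuity of the length map and of $H$, then extract and refine a finite subcover. The only difference is cosmetic: the paper first fixes the canonical increasing sequence $(l_i)$ of levels at which $\Omega_L$ jumps and always takes $L^\ast = l_i$ with $\mathcal{L}_{H_{t^\ast}} \in [l_{i-1}, l_i[$, whereas you take any witness $L(t^\ast)$; your explicit separation of the obstacle $\omega$ from $\Omega_{L^\ast}^\star$ is in fact cleaner than the paper's corresponding sentence.
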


\begin{proof}
From the very definition of a discrete filtered set,
 one can define an increasing
sequence of real numbers $0=l_{-1} < l_0 <  l_1 < l_2 < \cdots $ with the
properties:
\begin{itemize}
\item $\Omega_{l_0} = \{\omega\}$ and for any integer $i \geq 1$, 
$\Omega_{l_{i-1}} \subsetneqq \Omega_{l_{i}}$;
\item  $ \Omega_{L}  =
  \Omega_{l_{i}}$ for every $L \in]l_{i-1}, l_i]$, $i \in \mathbb{N}$.
\end{itemize}
Pick any $t_\star \in [0,1]$ and assume that
$\mathcal{L}_{H_{t_\star}} \in [l_{i-1},l_i[$ for some $i \in
\mathbb{N}$. 
One has $H_{t_\star} \in
\mathfrak{R}_{\Omega_\star}(l_i)$ necessarily since $H_{t_\star}$
is $\Omega_\star$-allowed. We say that $H_{t} \in
\mathfrak{R}_{\Omega_\star}(l_i)$ for any $t \in [0,1]$ close enough to
$t_\star$. Indeed, we remark that the map $t \in [0,1]
\mapsto \displaystyle \mathcal{L}_{H_t}$ is continuous
 because of the existence and the 
continuity of the partial derivative $\displaystyle \frac{\partial
H}{ \partial s}$. Thus, if $\mathcal{L}_{H_{t_\star}} \in
]l_{i-1},l_i[$, then $\mathcal{L}_{H_{t}} \in
]l_{i-1},l_i[$ for $t$ close enough to $t_\star$. Now if
$\mathcal{L}_{H_{t_\star}} =  l_{i-1}$ ($i \geq 1$), then $\mathcal{L}_{H_{t}} \in
]l_{i-2},l_i[$ for $t$ close enough to $t_\star$. However,
$\mathcal{L}_{H_{t}}$ belongs also to
$\mathfrak{R}_{\Omega_\star}(l_i)$ for $t$ close enough to $t_\star$
because of the continuity of
$H$ (the euclidean distance $d(H_{t_\star}, \Omega_{l_i})$
of the path $H_{t}$ to the set $\Omega_{l_i}$ is $>0$, so does 
$d(H_{t}, \Omega_{l_i})$ for $t$ close enough to $t_\star$).
 This way one gets an open covering of $[0,1]$ from which one deduces a finite
open covering by compacity. One  easily concludes.
\end{proof}

\begin{figure}[thp]
  \centering\includegraphics[scale=.63]{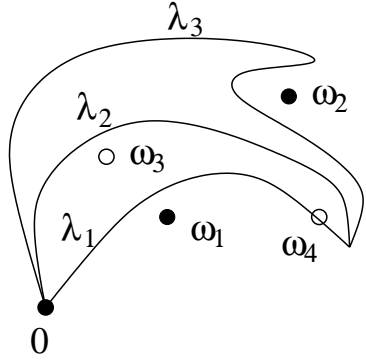}\\
  \centering\caption{We assume that $\Omega_\star$ is a  discrete
    filtered set centred at $0$. For $0 < L_1 < L_2$,  $\Omega_{L_1} =
    \{0,\omega_1, \omega_2\}$, $\Omega_{L_2} = \Omega_{L_1} \cup
    \{\omega_3, \omega_4\}$. The paths ${ \lambda_1, \lambda_2 \in
    \mathfrak{R}_{\Omega_\star}(L_1) }$ are  $\Omega_\star$-homotopic,
    the paths $\lambda_2, \lambda_3 \in 
    \mathfrak{R}_{\Omega_\star}(L_2)$ are $\Omega_\star$-homotopic,
    thus $\lambda_1$ and $\lambda_3$ are $\Omega_\star$-homotopic
    despite the fact that $\lambda_1 \notin \mathfrak{R}_{\Omega_\star}(L_2)$.
  }\label{Yafei-E-4}
\end{figure}

From lemma \ref{RemarksurHomotopy}, observe that  if $L_2 > L_1$, a path $\lambda_1 \in
\mathfrak{R}_{\Omega_\star}(L_1)$ can be  $\Omega_\star$-homotopic to another
path  $\lambda_2 \in  \mathfrak{R}_{\Omega_\star}(L_2)$ and at the
same time $\lambda_1$ not being homotopic to $\lambda_2$ in the usual way,
when both are seen as paths in  $\mathfrak{R}_{\Omega_\star}(L_2)$. 
Even, we may have $\lambda_1 \notin
\mathfrak{R}_{\Omega_\star}(L_2)$, see Fig. \ref{Yafei-E-4}.

\subsection{Riemann surface associated with a discrete filtered set}

\begin{defi}\label{homotopyclass}
Let $\Omega_\star$ be a discrete filtered set centred at
$\omegadot \in \mathbb{C}$. We set:
$$\mathscr{R}_{\Omega_\star} = \{\zeta = \mathrm{cl}(\lambda) \, \mid \, \lambda \in
\mathfrak{R}_{\Omega_\star}\} 
\hspace{2mm} \mbox{and} \hspace{2mm} \mathfrak{p} : \zeta = 
\mathrm{cl}(\lambda) \mapsto
\zetadot = \underline{\lambda}(1) \in \mathbb{C}.
$$
\end{defi}

Remark that  $\mathfrak{p}^{-1}(\omegadot)$ is reduced to a single
point $\omega = \mathrm{cl}(\mbox{constant path})$. 
 This is why  one usually considers $\mathscr{R}_{\Omega_\star}$ as a pointed
space $(\mathscr{R},\omega)$.

Let $\Omega_\star$  be a discrete filtered set centred at
$\omegadot \in \mathbb{C}$, and set $\omega =\mathfrak{p}^{-1}(\omegadot)$.
One can  endow $\mathscr{R}_{\Omega_\star}$ with a separated topology, a basis $\mathscr{B}=
\{ \mathscr{U} \}$ of open 
sets\index{Riemann surface $\mathscr{R}_{\Omega_\star}$!topology $\mathscr{B}$} 
defining this topology  being given as follows\footnote{This topoloy
  is not detailed in \cite{CNP1}.}. (We adapt the
classical construction of a universal covering
\cite{For}). Let us consider a
point $\zeta \in \mathscr{R}_{\Omega_\star}$.
\begin{itemize}
\item Assume that  
$\zeta = \omega$. For some $L>0$ we consider $\stackrel{\bullet}{\mathscr{U}} \subset
  D(\omegadot,L) \setminus \Omega_{L}^\star$  a star-shaped domain with
  respect to $\omegadot$. Let
 $\mathscr{U} \subset \mathscr{R}_{\Omega_\star}$ be the set of all
 $\xi = \mathrm{cl}(\lambda)$ where  $\lambda \in
  \mathfrak{R}_{\Omega_\star}(L)$ is any path  ending at $\xidot \in
  \stackrel{\bullet}{\mathscr{U}}$ and whose 
  image is the line segment $[\omega,\xidot]$. (For a given $\xidot$, 
the length of these paths is
  $|\xidot| < L$ and all these paths belong to the
  same $\Omega_\star$-homotopy class).
\item Suppose that $\zeta \neq \omega$. We choose  a path $\lambda_1 \in
  \mathfrak{R}_{\Omega_\star}(L)$ such that $\mathrm{cl}(\lambda_1) =
  \zeta$.  For some $L_2 >0$ such
  that $\mathcal{L}_{\lambda_1} + L_2 < L$,  we consider
${ \stackrel{\bullet}{\mathscr{U}}
 \subset  D(\zetadot,L_2) \setminus \Omega_{L} \subset D(\omegadot,L)
\setminus \Omega_{L} }$ such that $\stackrel{\bullet}{\mathscr{U}}$ is a
  star-shaped domain with respect to $\zetadot=\lambda_1(1)$. For $\xidot
\in \stackrel{\bullet}{\mathscr{U}}$, 
consider a path $\lambda_2$ starting from $\zetadot$, ending at $\xidot$ and whose
  image is the line segment $[\zetadot,\xidot]$. Then the product $\lambda_1
  \lambda_2$ belongs to $\mathfrak{R}_{\Omega_\star}(L)$ and we consider its
  $\Omega_\star$-homotopy class $\xi = \mathrm{cl}(\lambda_1  \lambda_2)$. We note
$\mathscr{U}$  the set of such points $\xi$.
\end{itemize}

We show that the system $\mathscr{B}= \{ \mathscr{U} \}$ made of these sets
 provides a basis for a topology on
$\mathscr{R}_{\Omega_\star}$.
 Obviously, every element $\xi  \in \mathscr{R}_{\Omega_\star}$
  belongs to at least one $\mathscr{U} \in \mathscr{B}$.
Now assume that $\xi \in \mathscr{U} \cap
  \mathscr{V}$, $\mathscr{U}, \mathscr{V} \in \mathscr{B}$.
\begin{itemize}
\item If $\xi = \omega$, then necessarily
$\stackrel{\bullet}{\mathscr{U}}$ and 
$\stackrel{\bullet}{\mathscr{V}}$ are two  star-shaped domains with
  respect to $\omegadot$, $\stackrel{\bullet}{\mathscr{U}}$ is a subset
  of $ D(\omegadot,L_1) \setminus \Omega_{L_1}^\star$ and  
$\stackrel{\bullet}{\mathscr{V}}$ is a subset
  of ${ D(\omegadot,L_2) \setminus \Omega_{L_2}^\star}$. Set $L = \max
  \{L_1, L_2\}$. Then $\stackrel{\bullet}{\mathscr{W}} =
\stackrel{\bullet}{\mathscr{U}} \cap 
\stackrel{\bullet}{\mathscr{V}} \subset
  D(\omegadot,L) \setminus \Omega_{L}^\star$ is
  also a  star-shaped domain with
  respect to $\omegadot$ to which is associated a $\mathscr{W} \in
  \mathscr{B}$ that satisfies:
$\xi \in \mathscr{W} \subset  \mathscr{U} \cap \mathscr{V}$.
\item Otherwise  $\xi \neq \omega$ and $\xi \in 
  \mathscr{U} \cap \mathscr{V}$. There is no loss of generality in
  assuming also that $\omega \notin \mathscr{U} \cap \mathscr{V}$. Thus :
\begin{itemize}
\item for some $L>0$, $\stackrel{\bullet}{\mathscr{U}}
 \subset  D(\zetadot_1,L_2)  \setminus \Omega_{L}$  is a
  star-shaped domain with respect to $\zetadot_1 = \lambda_1(1)$  where
  $\lambda_1 \in  \mathfrak{R}_{\Omega_\star}(L) $  satisfies $\displaystyle
  \mathcal{L}_{\lambda_1}+L_2 <L$. Also we have $\xi = \mathrm{cl}(\lambda_1
  \lambda_2)$ where
 $\lambda_2$ starts from $\zetadot_1$, ends at $\xidot$ and is such that
${ \underline{\lambda}_2([0,1]) = [\zetadot_1,\xidot] }$.
\item for some $L^\prime>0$, $\stackrel{\bullet}{\mathscr{V}}
 \subset  D({\zetadot}{_1^\prime},L_2^\prime) \setminus \Omega_{L^\prime} $  is a
  star-shaped domain with   respect to ${\zetadot}{_1^\prime} = \lambda_1^\prime(1)$
where $\lambda_1^\prime \in \mathfrak{R}_{\Omega_\star}(L^\prime) $ satisfies 
$\mathcal{L}_{\lambda_1^\prime} + L_2^\prime < L^\prime$.  Also $\xi =
\mathrm{cl}(\lambda_1^\prime \lambda_2^\prime)$ where
 $\lambda_2^\prime$ starts from ${\zetadot}{_1^\prime}$, ends at $\xidot$
and is such that  $\underline{\lambda}_2^\prime([0,1]) = [\zeta_1^\prime,\xi]$.
\end{itemize}
Now choose $L_2^{\prime \prime}>0$ such that $\displaystyle
 \mathcal{L}_{\lambda_1 \lambda_2} +L_2^{\prime \prime} <L$  and $\displaystyle
 \mathcal{L}_{\lambda_1^\prime \lambda_2^\prime} +L_2^{\prime \prime}
 <L^\prime$. Consider 
$\stackrel{\bullet}{\mathscr{W}} \subset \stackrel{\bullet}{\mathscr{U}}
 \cap \stackrel{\bullet}{\mathscr{V}} \cap D(\xidot,L_2^{\prime
   \prime})$  a star-shaped domain with 
 respect to $\xidot$. To $\stackrel{\bullet}{\mathscr{W}}$ is
 associated  $\mathscr{W} \in \mathfrak{B}$  and
$\xi \in \mathscr{W}  \subset \mathscr{U} \cap \mathscr{V}$.
\end{itemize}
We show that the topology thus defined by $\mathfrak{B}$ is
Hausdorff.  We consider two points
$\zeta $ and $\zeta^\prime$ in
$\mathscr{R}_{\Omega_\star}$. Clearly if $\zetadot \neq {\zetadot}{^\prime}$,
then $\zeta$ and $\zeta^\prime$ have disjoints neighbourhoods. Thus
assume that $\zetadot = {\zetadot}{^\prime}$ ($\neq \omegadot$,
say) but $\zeta \neq \zeta^\prime$. Suppose the existence of a neighbourhood  
$\mathscr{U} \in \mathfrak{B}$ of $\zeta$, a neighbourhood 
$\mathscr{U}^\prime \in \mathfrak{B}$  of
$\zeta^\prime$  such that  $\mathscr{U}  \cap \mathscr{U}^\prime \neq
\emptyset$. This means that
there exists $\xi \in \mathscr{U} \cap \mathscr{U}^\prime$ that satisfies:
\begin{itemize}
\item $\xi = \mathrm{cl}(\lambda_1 \lambda_2)$ with $\zeta= \mathrm{cl}(\lambda_1)$,
 $\lambda_2$ starting from $\zetadot$, ending at $\xidot$ with image 
  the line segment $[\zetadot,\xidot] \subset \stackrel{\bullet}{\mathscr{U}}$;
\item $\xi = \mathrm{cl}(\lambda_1^\prime \lambda_2^\prime)$ with 
$\zeta^\prime= \mathrm{cl}(\lambda_1^\prime)$,
 $\lambda_2^\prime $ starting from ${\zetadot}{^\prime} =\zetadot$, endings at $\xidot$
  the line segment $[\zetadot,\xidot] \subset
  {\stackrel{\bullet}{\mathscr{U}}}{^\prime}$ for its range. 
\end{itemize}
This implies that $\lambda_1$ and $\lambda_1^\prime$ are in the same
class, that is $\zeta = \zeta^\prime$ and we get a contradiction.

The topological space $\mathscr{R}_{\Omega_\star}$ is obviously 
(arc)connected. Also, by the very construction of the topology, for
every $\mathscr{U} \in \mathfrak{B}$, the restriction
$\mathfrak{p}|_{\mathscr{U}} : \zeta  \mapsto \zetadot \in
\stackrel{\bullet}{\mathscr{U}}$ is a 
homeomorphism. This means that $\mathscr{R}_{\Omega_\star}$ is an
\'etal\'e space on 
$\mathbb{C}$ (but of course not a covering space). 
We have thus shown the following proposition.

\begin{prop}\label{PFS-RS}
Let $\Omega_\star$ be is a discrete filtered set.
The (pointed) space  $\mathscr{R}_{\Omega_\star}$ is a topologically
(arc)connected separated space.  With the projection $\mathfrak{p}$, 
 the space  $\mathscr{R}_{\Omega_\star}$ is an \'etal\'e space on
$\mathbb{C}$.
\end{prop}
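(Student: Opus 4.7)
The plan is to construct explicitly a basis $\mathscr{B}$ for a topology on $\mathscr{R}_{\Omega_\star}$, adapted from the classical trivializations used for universal covering spaces, and then to verify one by one the three claims: arc-connectedness, the Hausdorff property, and the étalé structure. Near a point $\zeta = \mathrm{cl}(\lambda)$ with $\lambda \in \mathfrak{R}_{\Omega_\star}(L)$, I would choose a star-shaped domain $\stackrel{\bullet}{\mathscr{U}} \subset D(\zetadot, L_2) \setminus \Omega_L$ around $\zetadot$ with $\mathcal{L}_\lambda + L_2 < L$, and declare $\mathscr{U}$ to consist of the $\Omega_\star$-homotopy classes of the concatenations $\lambda \cdot [\zetadot, \xidot]$ for $\xidot \in \stackrel{\bullet}{\mathscr{U}}$. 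The case $\zeta = \omega$ is handled as a degenerate special case in which only the star-shaped domain enters.

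The first step is to verify that $\mathscr{B}$ is really a basis, which amounts to showing that if $\xi \in \mathscr{U} \cap \mathscr{V}$ then some $\mathscr{W} \in \mathscr{B}$ containing $\xi$ lies in the intersection. I would pick a small star-shaped domain $\stackrel{\bullet}{\mathscr{W}}$ around $\xidot$ inside $\stackrel{\bullet}{\mathscr{U}} \cap \stackrel{\bullet}{\mathscr{V}}$ whose radius is compatible with both charts, the crucial point being that the two representations of the same $\xi$ agree as $\Omega_\star$-homotopy classes because they differ by concatenations of line segments lying inside a single star-shaped domain avoiding $\Omega_L$.

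Once the topology is in place, arc-connectedness is essentially built in: for any $\zeta = \mathrm{cl}(\lambda)$, the assignment $s \mapsto \mathrm{cl}(\underline{\lambda}|_{[0,s]})$ lifts $\lambda$ itself to a continuous path in $\mathscr{R}_{\Omega_\star}$ from $\omega$ to $\zeta$, continuity being inherited from $\lambda$ through the local charts. The étalé property then follows at once, since by construction $\mathfrak{p}|_{\mathscr{U}}: \mathscr{U} \to \stackrel{\bullet}{\mathscr{U}}$ is a bijection that sends basic open sets of $\mathscr{R}_{\Omega_\star}$ to star-shaped subdomains of $\mathbb{C}$ and is therefore a homeomorphism.

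The main obstacle, on which I would focus attention, is the Hausdorff property in the delicate case $\zetadot = \zetadot'$ but $\zeta \neq \zeta'$; the case of distinct projections being trivial by pulling back disjoint euclidean disks. A shared point $\xi \in \mathscr{U} \cap \mathscr{V}$ would come with representatives $\lambda \cdot [\zetadot, \xidot]$ and $\lambda' \cdot [\zetadot, \xidot]$, and the task is to exhibit a genuine $\Omega_\star$-homotopy between $\lambda$ and $\lambda'$ by retracting the straight segments $[\zetadot, \xidot]$ back to $\zetadot$ within the star-shaped domains, keeping every intermediate path $\Omega_\star$-allowed (which is guaranteed by the choice of radii $L_2, L_2'$ and Lemma \ref{RemarksurHomotopy}). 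This forces $\zeta = \zeta'$, contradicting the assumption, and thereby establishes separation.
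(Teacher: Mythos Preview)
Your proposal is correct and follows essentially the same route as the paper: the paper constructs exactly this basis of star-shaped trivializations (distinguishing the base point $\omega$ as you do), checks the basis axiom via a small star-shaped $\stackrel{\bullet}{\mathscr{W}}$ inside the intersection, proves Hausdorffness by the same contradiction (a common $\xi$ in two neighbourhoods forces $\mathrm{cl}(\lambda_1)=\mathrm{cl}(\lambda_1')$ after cancelling the appended segments), and then reads off arc-connectedness and the \'etal\'e property directly from the construction. Note that the paper places this argument in the discussion \emph{preceding} the proposition rather than in a formal proof environment; your invocation of Lemma~\ref{RemarksurHomotopy} in the Hausdorff step is harmless but not really needed, since the retraction of $[\zetadot,\xidot]$ stays inside a single $\mathfrak{R}_{\Omega_\star}(L)$.
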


When pulling back by $\mathfrak{p}$ the complex structure of $\mathbb{C}$,
$\mathscr{R}_{\Omega_\star}$ becomes a Riemann surface. This Riemann
surface is the smallest in the following sense:

\begin{lemm}
Let $(\mathscr{R}^\prime,
\mathfrak{p}^\prime, \omega^\prime)$ be a Riemann surface over
$\mathbb{C}$, with $\mathfrak{p}^\prime(\omega^\prime)= \omegadot$. We
suppose that any $\Omega_\star$-allowed path can be lifted on
$\mathscr{R}^\prime$ from
$\omega^\prime$ with respect to $\mathfrak{p}^\prime$. Then
$(\mathscr{R}_{\Omega_\star},\mathfrak{p}, \omega)$ is contained in $(\mathscr{R}^\prime,
\mathfrak{p}^\prime, \omega^\prime)$, that is there exists a continuous map
$\tau: \mathscr{R}_{\Omega_\star} \to \mathscr{R}^\prime$ such that
$\mathfrak{p}^\prime \circ \tau = \mathfrak{p}$ and $\tau(\omega)=\omega^\prime$.
\end{lemm}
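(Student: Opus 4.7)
The plan is to define $\tau$ by path lifting, then verify well-definedness, continuity, and the required compatibilities. For $\zeta = \mathrm{cl}(\lambda) \in \mathscr{R}_{\Omega_\star}$, let $\tilde{\lambda}: [0,1] \to \mathscr{R}'$ denote the lift of the standardized path $\underline{\lambda}$ starting at $\omega'$: existence is the hypothesis, and uniqueness follows from the fact that $\mathfrak{p}'$, being a local homeomorphism, enjoys the unique path lifting property (the coincidence set of two lifts is open and closed in $[0,1]$). Set $\tau(\zeta) := \tilde{\lambda}(1)$. Once well-definedness is established, the identities $\mathfrak{p}' \circ \tau = \mathfrak{p}$ and $\tau(\omega) = \omega'$ are immediate from the definition.

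The crucial point is independence of the representative. Given an $\Omega_\star$-homotopy $H = (H_t)_{t \in [0,1]}$ between two representatives $\lambda_0, \lambda_1$, each $H_t$ is $\Omega_\star$-allowed (so liftable to some $\tilde{H}_t$ starting at $\omega'$). Since $H$ fixes its endpoints, $\mathfrak{p}'(\tilde{H}_t(1)) = \underline{\lambda}_0(1)$ is constant in $t$, so $t \mapsto \tilde{H}_t(1)$ takes values in the discrete fibre $\mathfrak{p}'^{-1}(\underline{\lambda}_0(1))$. It thus suffices to prove continuity of $t \mapsto \tilde{H}_t(1)$: connectedness of $[0,1]$ then forces it to be constant, yielding $\tilde{\lambda}_0(1) = \tilde{\lambda}_1(1)$.

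To produce this continuity, I fix $t_0 \in [0,1]$ and subdivide $[0,1]$ by $0 = s_0 < \cdots < s_N = 1$ finely enough that each compact piece $\tilde{H}_{t_0}([s_k, s_{k+1}])$ sits inside some open $W_k \subset \mathscr{R}'$ on which $\mathfrak{p}'$ is a homeomorphism onto its image $V_k \subset \mathbb{C}$; finitely many such $W_k$ exist by compactness and the local-homeomorphism property. By continuity of $H$, for $t$ in a small enough neighborhood $J$ of $t_0$ one has $H_t([s_k, s_{k+1}]) \subset V_k$ for every $k$. The local candidates $(\mathfrak{p}'|_{W_k})^{-1} \circ H_t$ and $(\mathfrak{p}'|_{W_{k+1}})^{-1} \circ H_t$ agree at $t = t_0$ at the junction $s_k$, and are both continuous in $t$, hence they remain in the common open set $W_k \cap W_{k+1}$ (where $\mathfrak{p}'$ is injective) for $t$ close to $t_0$, so they coincide there. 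Gluing yields a jointly continuous map $(s,t) \mapsto \tilde{H}_t(s)$ on $[0,1] \times J$ which, by uniqueness of path lifting, coincides with the lift defined above. Hence $t \mapsto \tilde{H}_t(1)$ is continuous at $t_0$, and well-definedness follows. Lemma~\ref{RemarksurHomotopy} could alternatively be used at this step to partition $[0,1]$ into intervals on which all $H_t$ share a common radius $L_i$, but it is not strictly needed.

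Continuity of $\tau$ is then checked on a basic neighborhood $\mathscr{U}$ of $\xi \in \mathscr{R}_{\Omega_\star}$ from the topology described before Proposition~\ref{PFS-RS}: a point of $\mathscr{U}$ has the form $\mathrm{cl}(\lambda_1 \lambda_2)$ with $\lambda_2$ a segment from $\zetadot_1$ to a variable $\xidot \in \stackrel{\bullet}{\mathscr{U}}$. Shrinking $\stackrel{\bullet}{\mathscr{U}}$ so that it lies in the homeomorphic image $\mathfrak{p}'(W)$ of a single open $W \subset \mathscr{R}'$ containing $\tau(\xi)$, one sees that $\tau|_{\mathscr{U}} = (\mathfrak{p}'|_W)^{-1} \circ \mathfrak{p}|_{\mathscr{U}}$, which is manifestly continuous. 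The main obstacle throughout is the tubular lifting argument in the third paragraph: since an étalé space need not possess evenly covered neighborhoods, the covering-space homotopy-lifting theorem cannot be invoked as a black box, and the local-homeomorphism property must be propagated along the homotopy by the explicit grid construction above.
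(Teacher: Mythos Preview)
Your proof is correct and follows the same strategy as the paper: define $\tau$ by sending $\mathrm{cl}(\lambda)$ to the endpoint of the $\mathfrak{p}'$-lift of $\lambda$ from $\omega'$, then check well-definedness, continuity, and fiber compatibility. The paper's version is much terser---it invokes ``uniqueness of lifting'' for well-definedness and ``\'etal\'e spaces'' for continuity without further comment---whereas your explicit tubular/grid homotopy-lifting argument supplies precisely the detail the paper elides (and is needed, since $\mathscr{R}'$ is only an \'etal\'e space, not a covering).
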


\begin{proof}
From the very definition of  the Riemann
surface  $(\mathscr{R}_{\Omega_\star}, \mathfrak{p})$ 
 associated with a  discrete filtered set $\Omega_\star$ centred
at $\omegadot \in \mathbb{C}$,  every $\Omega_\star$-allowed path can
be lifted with respect to  $\mathfrak{p}$ into a path on
$\mathscr{R}_{\Omega_\star}$ with initial point $\omega =
\mathfrak{p}^{-1}(\omegadot)$. 
Indeed, assume that $\underline{\lambda} \in
\mathfrak{R}_{\Omega_\star}(L)$ and define $\underline{\lambda}_t : s \in
[0,1] \mapsto \underline{\lambda}_t(s) = \underline{\lambda}(ts)$ 
for $t \in [0,1]$. Then $\underline{\lambda}_t \in
\mathfrak{R}_{\Omega_\star}(L)$ and the mapping
$\underline{\Lambda} : 
t \in [0,1] \mapsto \Lambda(t) = \mathrm{cl}(\underline{\lambda}_t )$
is continuous and is a lifting of $\underline{\lambda}$ from
$\omega$. 
This lifting is unique thanks to the uniqueness of
lifting \cite{For}. Pick any $\Omega_\star$-allowed path
$\lambda$. Its lifting with respect to $\mathfrak{p}$ from $\omega$
ends at $\zeta$ while its lifting with respect to
$\mathfrak{p}^\prime$ from $\omega^\prime$ ends at $\xi$. This gives a
mapping $\tau : \zeta \in \mathscr{R}_{\Omega_\star} \mapsto \xi \in
\mathscr{R}^\prime$ which is  well-defined and injective (uniqueness
of lifting), continuous (we work with \'etal\'e spaces) and preserves fibers.
\end{proof}

\begin{defi}\label{DFS-RS}
Let $\Omega_\star$ be a   discrete filtered set centred at
$\omegadot$. Any Riemann surface $(\mathscr{R}^\prime,
\mathfrak{p}^\prime, \omega^\prime)$ over
$\mathbb{C}$ which is isomorphic to $(\mathscr{R}_{\Omega_\star},
\mathfrak{p}, \omega)$, is called
 a Riemann surface associated with $\Omega_\star$.
\end{defi}

In this definition, isomorphic means the existence of a fiber preserving
homeomorphism $\tau: \mathscr{R}_{\Omega_\star} \to \mathscr{R}^\prime$.

We would like to point out a consequence of the topology considered on
these Riemann surfaces.
On Fig. \ref{Yafei-E-5}, we consider  a  discrete
    filtered set $\Omega_\star$  centred at $0$ such that, for $0 <
    L_1 < L_2$,  $\Omega_{L_1} = 
    \{0,\omega_1, \omega_2\}$, $\Omega_{L_2} = \Omega_{L_1} \cup
    \{\omega_3, \omega_4\}$. We have drawn two paths  $\lambda_1, \lambda_2 \in
    \mathfrak{R}_{\Omega_\star}(L_1)$ ending at the same point
    $\zeta$ and
    $\Omega_\star$-homotopic, $\mathrm{cl}(\lambda_1) = \mathrm{cl}(\lambda_2)$. Also we
    have drawn a path $\lambda_3$ starting at $\zeta$ and ending at
    $\xi$   such that the  product path $\lambda_2\lambda_3$ belongs to
    $\mathfrak{R}_{\Omega_\star}(L_2)$. We remark that
    $\lambda_1\lambda_3 \notin  \mathfrak{R}_{\Omega_\star}(L_2)$. \\
The path $\lambda_1$, {\em resp.}  $\lambda_2$, can be lifted with
respect to  $\mathfrak{p}$ to a path $\Lambda_1$, {\em resp.}  $\Lambda_2$, on
$\mathscr{R}_{\Omega_\star}$ with initial point $0=
\mathfrak{p}^{-1}(0)$ 
and common end point $\mathrm{cl}(\lambda_1) = \mathrm{cl}(\lambda_2)$. From
that point, $\lambda_3$ can be lifted with
respect to  $\mathfrak{p}$ to a path $\Lambda_3$ ending at $ \mathrm{cl}(\lambda_2
\lambda_3)$. We thus see that the path $\Lambda_1 \Lambda_3$ is well
defined and $\mathfrak{p}(\Lambda_1 \Lambda_3) = \lambda_1
\lambda_3$. This means that $\lambda_1 \lambda_3$ can be lifted on
$\mathscr{R}_{\Omega_\star}$ with
respect to  $\mathfrak{p}$ from  $0$ despite the fact that
$\lambda_1\lambda_3$ is not $\Omega_\star$-allowed. We say that
$\omega_3$ is a ``removable $\Omega_\star$-point'' for $\lambda_1\lambda_3$.

\begin{figure}[thp]
\centering\includegraphics[scale=.7]{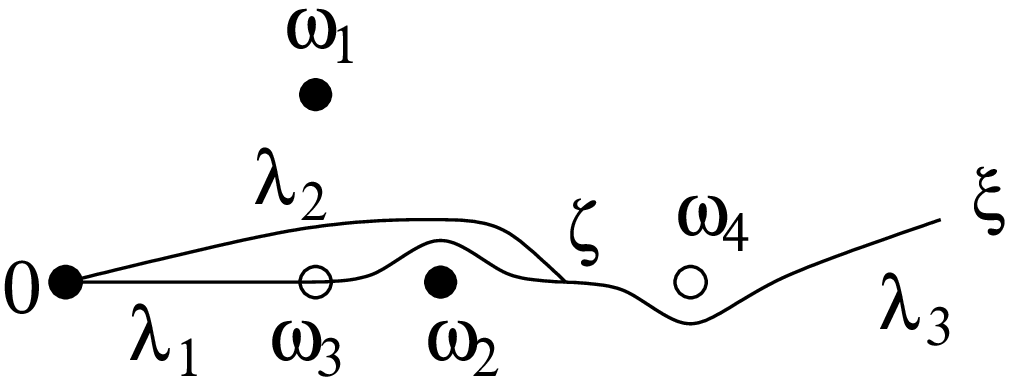}\\
  \centering\caption{
  }\label{Yafei-E-5}
\end{figure}

\begin{defi}\label{removablepoint}
Let $\Omega_\star$  be a discrete filtered set centred at
$\omegadot_0 \in \mathbb{C}$ and
$(\mathscr{R}_{\Omega_\star},\mathfrak{p})$ its associated Riemann
surface. Let $\Lambda$ be a piecewise $\mathcal{C}^1$ path on $\mathscr{R}_{\Omega_\star}$
starting from $\omega_0 = \mathfrak{p}^{-1}(\omegadot_0)$ and $\lambda =
\mathfrak{p} \circ \Lambda$. Let $L>\mathcal{L}_{\lambda}$. If $\lambda$
meets a point $\omega \in \Omega_L$, then $\omega$ is called a
\textbf{removable $\Omega_\star$-point} for $\lambda$.
\end{defi}

\subsection{Distance of a path to $\Omega_\star$}

Let  $\Omega_\star$ be a discrete filtered set.
We consider a point $\zeta \in
\mathscr{R}_{\Omega_\star}$. For $r>0$ small enough and 
since a disc is a star-shaped domain with
respect to its origin, there exists a connected neighbourhood  
$\mathscr{U} \subset \mathscr{R}_{\Omega_\star}$  of
$\zeta$ so that
$\mathfrak{p}|_{\mathscr{U}} : \mathscr{U} \to D(\zetadot,r)$
is a homeomorphism.

\begin{defi}
For any $\zeta \in \mathscr{R}_{\Omega_\star}$ and for $r>0$ small
enough, the \textbf{ball}  $D(\zeta,r)$ centred at
$\zeta$ with radius $r$ is the connected neighbourhood of
$\zeta$ so that 
$\mathfrak{p}|_{D(\zeta,r)} : D(\zeta,r) \to D(\zetadot,r) \subset \mathbb{C}$
is a homeomorphism. 
The distance $\rho_{\Omega_\star}(\zeta)$ of $\zeta$ to $\Omega_\star$
is  the supremum of the $r>0$
such that $\zeta$ has a neighbourhood of the form $D(\zeta,r)$. 
\end{defi}

In other words, $\rho_{\Omega_\star}(\zeta)$ is the distance of
$\zeta$ (for the norm $|.|$) to the boundary of $\mathscr{R}_{\Omega_\star}$. 
If $\Omega_\star$ is centred at $\omegadot = \mathfrak{p}(\omega)$,
then of course $\rho_{\Omega_\star}(\omegadot) =
\rho_{\Omega_\star}(\omega)$ and there is no risk of
misunderstanding. Notice that the mapping $\zeta \in
\mathscr{R}_{\Omega_\star} \mapsto \rho_{\Omega_\star}(\zeta)$ is
continuous since $\mathfrak{p}$ is continuous. This implies that 
$t \in [0,1] \mapsto
\rho_{\Omega_\star}\big(\underline{\Lambda}(t)\big)$ is a continuous
mapping when $\Lambda$ on
$\mathscr{R}_{\Omega_\star}$ starting from $\omega$, thus 
$\inf_{t \in [0,1]} \rho_{\Omega_\star}\big( \Lambda(t)\big) >0$ by compactness.

\begin{defi}\label{dpathomega}
Let  $\Omega_\star$ be a discrete filtered set centred
at ${\omegadot = \mathfrak{p}(\omega) \in \mathbb{C}}$.
For any path $\lambda$ issued from $\omegadot$ that can be lifted to 
$\mathscr{R}_{\Omega_\star}$ with respect to $\mathfrak{p}$ from
$\omega$ into the path $\Lambda$, one defines the distance $d(\lambda,
\Omega_\star)$ of $\lambda$ to  $\Omega_\star$ by
${d(\lambda, \Omega_\star) = \inf_{t \in [0,1]} \rho_{\Omega_\star}\big( \Lambda(t)\big) >0}$.
\end{defi}

\subsection{Some properties of the Riemann surface
  $\mathscr{R}_{\Omega_\star}$}

\begin{prop}\label{PropdefbasicbisB}
The Riemann surface $\mathscr{R}_{\Omega_\star}$ associated with the
discrete filtered set $\Omega_\star$  is simply connected.
\end{prop}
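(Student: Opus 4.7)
The plan is to prove every continuous loop $\Gamma : [0,1] \to \mathscr{R}_{\Omega_\star}$ based at $\omega$ is null-homotopic, which together with path-connectedness (Proposition \ref{PFS-RS}) gives simple connectedness. The naive strategy of lifting a downstairs null-homotopy of $\mathfrak{p}\circ\Gamma$ is obstructed by Definition \ref{Allpath}: a non-constant loop at $\omegadot$ is never $\Omega_\star$-allowed, since its endpoint lies in $\Omega_L$. I therefore reduce to the case of paths between distinct points. If $\Gamma$ is non-constant, continuity yields $t_0 \in (0,1)$ with $\zeta := \Gamma(t_0) \neq \omega$; writing $\Gamma = \Gamma_1 \Gamma_2$ and applying the standard reversal identity $\Gamma_1\Gamma_2 \sim \overline{\Gamma_2}\Gamma_2 \sim \mbox{constant}$, the problem reduces to showing that any two paths $\Theta_1, \Theta_2$ on $\mathscr{R}_{\Omega_\star}$ from $\omega$ to a common $\zeta \neq \omega$ are homotopic rel endpoints.

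Given such $\Theta_1, \Theta_2$, I first homotope each $\Theta_i$ rel endpoints, using the contractibility of a ball $D(\omega, r) \cong D(\omegadot, r)$ of small radius $r$, so that $\Theta_i$ visits $\omega$ only at $t = 0$. Next, compactness of $\Theta_i([0,1])$ together with the basis $\mathscr{B}$ of charts onto star-shaped domains allows me to deform $\Theta_i$ further, rel endpoints, so that its projection $\lambda_i = \mathfrak{p}\circ\Theta_i$ becomes piecewise linear of finite length, staying in $D(\omegadot, L) \setminus \Omega_L^\star$ for a common $L$ and ending at $\zetadot \neq \omegadot$. Then $\lambda_i \in \mathfrak{R}_{\Omega_\star}(L)$ is $\Omega_\star$-allowed. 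By the uniqueness of path lifting on the étale space $\mathscr{R}_{\Omega_\star}$ (shown just before Definition \ref{DFS-RS}), $\Theta_i$ is the unique lift of $\lambda_i$ from $\omega$; hence by Definition \ref{homotopyclass} we have $\zeta = \mathrm{cl}(\lambda_1) = \mathrm{cl}(\lambda_2)$, so $\lambda_1 \sim_{\Omega_\star} \lambda_2$.

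It remains to lift the resulting $\Omega_\star$-homotopy $H : [0,1]^2 \to \mathbb{C}$ to a continuous homotopy $\tilde H : [0,1]^2 \to \mathscr{R}_{\Omega_\star}$ between $\Theta_1$ and $\Theta_2$. For each fixed $t$, the $\Omega_\star$-allowed slice $H_t$ lifts uniquely via $s \mapsto \mathrm{cl}\bigl(H_t|_{[0,s]}\bigr)$. The main technical issue is joint continuity in $(s,t)$, since $\mathfrak{p}$ is only étale and not a covering. I would handle this by invoking Lemma \ref{RemarksurHomotopy} to cover $[0,1]$ by finitely many intervals on which all $H_t$ lie in a common $\mathfrak{R}_{\Omega_\star}(L_i)$; then on sufficiently small rectangles of the parameter square one has $\tilde H = (\mathfrak{p}|_{\mathscr{U}})^{-1}\circ H$ for a single chart $\mathscr{U} \in \mathscr{B}$, which is manifestly continuous. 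This step is the main obstacle and the place where the proof really uses the special $\Omega_\star$-structure; the rest is formal. Concatenating with the earlier local modifications contracts $\Gamma$ to the constant loop at $\omega$, yielding $\pi_1(\mathscr{R}_{\Omega_\star}, \omega) = 0$.
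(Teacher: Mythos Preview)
Your proposal is correct and follows essentially the same route as the paper's proof: split the loop at a point $\zeta\neq\omega$, deform so the two projected halves become $\Omega_\star$-allowed paths, observe they represent the same $\Omega_\star$-homotopy class $\zeta=\mathrm{cl}(\lambda_1)=\mathrm{cl}(\lambda_2)$, and lift the resulting $\Omega_\star$-homotopy back to $\mathscr{R}_{\Omega_\star}$. You are more explicit than the paper about the joint continuity of the lifted homotopy (invoking Lemma~\ref{RemarksurHomotopy}), which the paper simply asserts; otherwise the arguments coincide.
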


Thus $\mathscr{R}_{\Omega_\star}$ is conformally equivalent to the
open unit disc as a consequence of the uniformization theorem.

\begin{proof}
Let $\Omega_\star$ be a  discrete filtered set $\Omega_\star$ centred
at $\omegadot \in \mathbb{C}$ and  let $(\mathscr{R}_{\Omega_\star}, \mathfrak{p})$ 
be its associated pointed  Riemann surface, $\omega =
\mathfrak{p}^{-1}(\omegadot)$. 
Pick a non-constant
closed curve $\Lambda$ on $\mathscr{R}_{\Omega_\star}$. We want to show that 
$\Lambda$ is null-homotopic. Since $\mathscr{R}_{\Omega_\star}$ is
arcconnected, one can suppose that $\Lambda$ starts and ends at
$\omega$  and  there is no loss of generality in assuming that
$\Lambda$ does not meet $\omega$ apart from its extremities. Also, up to making
a slight deformation of $\Lambda$ in its homotopy class, one can
assume that $\lambda =  \mathfrak{p} \circ \Lambda$ is
$\mathcal{C}^1$, with length $\mathcal{L}_\lambda <L$ for some $L>0$
and that $\underline{\lambda}|_{]0,1[}$ avoids $\Omega_L$.
One can write $\Lambda$ under the
form\footnote{$\Lambda_2^{-1}$ is the inverse path,
  $\underline{\Lambda}_2^{-1}(s) = \underline{\Lambda}_2^{-1}(1-s)$.}
$\Lambda = \Lambda_1 \Lambda_2^{-1}$ where both $\Lambda_1$,
$\Lambda_2$ are paths starting from $\omega$ and ending at a point 
$\zeta \in \mathscr{R}_{\Omega_\star}$ with $\zeta \neq \omega$. 
We set $\lambda_1 =  \mathfrak{p} \circ \Lambda_1$ and 
$\lambda_2 =  \mathfrak{p} \circ \Lambda_2$. Both $\lambda_1$,
$\lambda_2$ are  $\Omega_\star$-allowed paths, precisely their belong
to $\mathfrak{R}_{\Omega_\star}(L)$. The path  $\lambda_1$, \underline{resp}. 
$\lambda_2$, can be lifted with respect to  $\mathfrak{p}$ from 
$\omega$ and, by uniqueness of lifting, corresponds to $\Lambda_1$, \underline{resp}. 
$\Lambda_2$. This implies that $\lambda_1$ and $\lambda_2$ are 
$\Omega_\star$-homotopic with $\zeta = \mathrm{cl}(\lambda_1) =
\mathrm{cl}(\lambda_2)$. The $\Omega_\star$-homotopy between
$\lambda_1$ and $\lambda_2$ can be lifted with respect to
$\mathfrak{p}$ and this provides a homotopy between $\Lambda_1$ and 
$\Lambda_2$. Therefore, $\Lambda = \Lambda_1 \Lambda_2^{-1}$ is null-homotopic.
\end{proof}

\begin{prop}\label{Propequiv}
Let $(\mathscr{R}_{\Omega_\star}, \mathfrak{p})$ be the Riemann
surface associated with a  discrete filtered set $\Omega_\star =
\Omega_\star (\omega)$ centred at  $\omegadot$.
Then, for every $\zeta  \in \mathscr{R}_{\Omega_\star}$, there
exists a  discrete filtered set $\Omega_\star (\zeta)$ centred at
$\zetadot = \mathfrak{p}(\zeta)$ such that
every $\Omega_\star (\zeta)$-allowed path starting from $\zetadot$ can be lifted
  on $\mathscr{R}_{\Omega_\star}$ from $\zeta$ with respect to $\mathfrak{p}$.
\end{prop}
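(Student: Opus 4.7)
The plan is to construct $\Omega_\star(\zeta)$ explicitly from $\Omega_\star$ and a representative path of $\zeta$, then deduce the lifting property by a connectedness argument on $[0,1]$.

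Fix a representative $\lambda_0 \in \mathfrak{R}_{\Omega_\star}(L_0)$ with $\mathrm{cl}(\lambda_0) = \zeta$, and let $\ell = \mathcal{L}_{\lambda_0} < L_0$. For each $L > 0$, I would define
$$\Omega_\star(\zeta)_L := \{\zetadot\} \cup \bigl(\Omega_{\ell + L} \cap D(\zetadot, L)\bigr).$$
This is a discrete filtered set centred at $\zetadot$ in the sense of Definition \ref{FiltDis}: finiteness of each level is inherited from $\Omega_{\ell+L}$, monotonicity in $L$ is inherited from $L \mapsto \Omega_L$, the inclusion $\Omega_\star(\zeta)_L \subset D(\zetadot, L)$ is built in, and for $L$ small enough $\Omega_\star(\zeta)_L = \{\zetadot\}$ because the other points of $\Omega_{\ell+L}$ sit at a fixed positive distance from $\zetadot$.

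The heart of the proof is the lifting property. Given $\mu \in \mathfrak{R}_{\Omega_\star(\zeta)}(L)$, I would consider the set $I \subseteq [0,1]$ of $t$ for which $\mu|_{[0,t]}$ admits a continuous lift $\Lambda^{(t)}$ on $\mathscr{R}_{\Omega_\star}$ starting at $\zeta$. Since $0 \in I$ and $I$ is open by the étalé nature of $\mathfrak{p}$ (Proposition \ref{PFS-RS}), it is enough to show $I$ is closed. At $T = \sup I$, I would produce the limit lift by realising it as an $\Omega_\star$-homotopy class: consider the concatenation $\widetilde{\lambda}_0 \cdot (\mu|_{[0,T]})$, where $\widetilde{\lambda}_0$ is a small perturbation of $\lambda_0$ within its $\Omega_\star$-homotopy class, chosen to avoid the finitely many points of $\Omega_{\ell+L} \setminus \Omega_{L_0}$ that may lie on the image of $\lambda_0$. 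Because $\mu$ avoids $\Omega_{\ell+L} \cap D(\zetadot, L)$ after leaving $\zetadot$, the concatenation is $\Omega_\star$-allowed at a parameter slightly larger than $\ell + L$, hence defines a point of $\mathscr{R}_{\Omega_\star}$ which coincides with $\lim_{t \to T^-} \Lambda^{(t)}$ by uniqueness of lifting.

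The main obstacle I expect is twofold. First, producing the perturbation $\widetilde{\lambda}_0$ within the $\Omega_\star$-homotopy class of $\lambda_0$: since $\Omega_{\ell+L}$ is finite and $\lambda_0$ is piecewise $\mathcal{C}^1$, one can locally detour around each offending point, and the resulting deformation is an $\Omega_\star$-homotopy by Lemma \ref{RemarksurHomotopy}. Second, and more delicate, is handling the case where $\ell + L$ is a ``jump value'' of the filtered structure (so that $\Omega_{\ell+L+\epsilon} \supsetneq \Omega_{\ell+L}$ for every $\epsilon > 0$); in that case new points of $\Omega_{L^*}$ with $L^* > \ell + L$ may fall inside $D(\zetadot, L)$ and not be avoided by $\mu$, blocking the concatenation from being $\Omega_\star$-allowed at parameter $L^*$. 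This case is handled either by enlarging the definition of $\Omega_\star(\zeta)_L$ so as to absorb the right-limit of $\Omega_{\ell+L}$, or by exploiting the fact that such jump values form a countable set and a more careful choice of the perturbation's length avoids the issue.
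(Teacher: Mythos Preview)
Your approach is essentially the paper's: define $\Omega_L(\zeta)$ as the trace of $\Omega_{L_0+L}$ on $D(\zetadot,L)$ together with $\{\zetadot\}$, perturb $\lambda_0$ within its $\Omega_\star$-homotopy class to dodge the finitely many new points of $\Omega_{L_0+L}\setminus\Omega_{L_0}$, and concatenate with $\mu$ to obtain an $\Omega_\star$-allowed path whose lift furnishes the lift of $\mu$ from $\zeta$.

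Two remarks on your packaging. First, the open--closed argument on $[0,1]$ is superfluous: once you know that the concatenation $\widetilde{\lambda}_0\cdot\mu$ is $\Omega_\star$-allowed, the lift of $\mu$ exists for all of $[0,1]$ at once; the paper does exactly this, phrasing it as an explicit $\Omega_\star$-homotopy $t\mapsto H_t$ (first the small perturbation of $\underline{\lambda}_0$, then the concatenation with $\underline{\lambda}|_{[\varepsilon,t]}$) whose endpoint path is $\underline{\lambda}$ and which therefore lifts. Second, your jump-value worry dissolves once you track the strict inequality $\mathcal{L}_\mu<L$: the perturbation can be made with length $<\ell+(L-\mathcal{L}_\mu)$, so the concatenation has length strictly below $\ell+L$ and the parameter $\ell+L$ itself already works, with no need to pass to $\ell+L+\varepsilon$. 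The paper sidesteps even this bookkeeping by using $L_0>\ell$ rather than $\ell$ in the definition of $\Omega_L(\zeta)$, which leaves room for the perturbation to remain inside $\mathfrak{R}_{\Omega_\star}(L_0)$.
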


\begin{proof}
We consider the Riemann
surface  $(\mathscr{R}_{\Omega_\star}, \mathfrak{p})$  associated with a
discrete filtered set $\Omega_\star$ centred 
at $\omegadot \in \mathbb{C}$ and set $\omega =
\mathfrak{p}^{-1}(\omegadot)$.  Pick a point
 $\zeta  \in \mathscr{R}_{\Omega_\star}$ with $\zeta \neq \omega$ and
 assume that $\zeta = \mathrm{cl}(\lambda_0)$, 
$\lambda_0 \in \mathfrak{R}_{\Omega_{L_0}}$ for some
$L_0>0$. We consider a path $\lambda$  ($\mathcal{C}^1$~piecewise) 
starting form $\zetadot$ and of length
$\mathcal{L}_\lambda$. 
If $\mathcal{L}_\lambda < \rho_{\Omega_\star}(\zeta)$, then  $\lambda$ can
be lifted from $\zeta$ with respect to $\mathfrak{p}$ : this is just a consequence
of the topology considered on  $\mathscr{R}_{\Omega_\star}$.\\
Assume now that $\lambda$ satisfies the properties :
$\underline{\lambda}(0) = \zetadot$, $\underline{\lambda}(]0,1])
\subset  \mathbb{C}\setminus \Omega_{L_0+L}$ and  $\mathcal{L}_\lambda
< L$ for some  $L>0$. For $\varepsilon >0$ small enough, one can construct
a $\Omega_\star$-homotopy
${ H : t \in [0,1] \mapsto H_t \in
  \mathfrak{R}_{\Omega_\star}  }$
such that
\begin{itemize}
\item $H_0 = \underline{\lambda}_0$;
\item for every $t \in [0,\varepsilon]$,  $H_t \in
   \mathfrak{R}_{\Omega_\star}(L_0)$ and $H_t(1) = \underline{\lambda}(t)$;
\item $H_\varepsilon$ belongs to  $\mathfrak{R}_{\Omega_\star}(L_0) \cap
  \mathfrak{R}_{\Omega_\star}(L_0+L)$;
\item for every $t \in [\varepsilon,1]$, $H_t \in
\mathfrak{R}_{\Omega_\star}(L_0+L)$ and  $H_t(1) = \underline{\lambda}(t)$.
\end{itemize}
Indeed, for $t \in [0,\varepsilon]$, $H_t$ realizes a small deformation
of $\underline{\lambda}_0$ so as to avoid the points of $\Omega_{L_0+L}$  while for $t
\in [\varepsilon,1]$, $H_t$ is for instance the standardized product of
$H_\varepsilon$ with $\underline{\lambda}|_{[\varepsilon, t]}$.
This $\Omega_\star$-homotopy can be lifted  with
respect to $\mathfrak{p}$ into a homotopy
$\mathcal{H}  : t \in [0,1] \mapsto \mathcal{H}_t$
where, for every $t \in [0,1]$, $\mathcal{H}_t : [0,1] \to
\mathscr{R}_{\Omega_\star}$ is a path starting at
$\omega$. Therefore, the path $\Lambda : t \in [0,1] \mapsto
\mathcal{H}_t(1) \in \mathscr{R}_{\Omega_\star}$ is a lifting of
$\underline{\lambda}$  from $\zeta$.
This has the following consequences. There exists a  discrete
filtered set $\Omega_\star(\zeta)$ centred at $\zetadot$, 
\begin{itemize}
\item for $L>0$ small enough, $\Omega_{L}(\zeta) =  \{\zeta\}$,
\item for $L>0$ large enough,
$\displaystyle  \Omega_{L}(\zeta) = \big( \Omega_{L_0+L} \cap D(\zetadot, L)
\big) \cup \{\zeta\} $
\end{itemize}
such that every $\Omega_\star(\zeta)$-allowed path can be lifted on
$\mathscr{R}_{\Omega_\star}$  with respect to $\mathfrak{p}$ from $\zeta$.
\end{proof}

\subsection{Seen and glimpsed points}\label{sectionseen}

We denote by $\mathbb{S}^1 \subset \mathbb{C}$  the circle of
directions\index{Directions of half-lines!on $\mathbb{C}$, $\mathbb{S}^1$} 
about $0$ of
half-lines  on $\mathbb{C}$. We  usually identify $\mathbb{S}^1$  with
$\mathbb{R}/2\pi\mathbb{Z}$.

\begin{defi}
Let  $I \subset \mathbb{S}^1$ be an open arc, $L>0$ and $\omega \in
\mathbb{C}$.  We denote by 
$\sect{_\omega^L}(I)$ the following open sector adherent to $\omega$:
$${\displaystyle \sect{_\omega^L}(I) = \{\zeta = \omega+
\xi e^{i\theta} \in 
\mathbb{C} \, \mid \, \theta \in I, \, 0 <\xi <L \}}.$$
\end{defi}

Assume that  $\Omega_\star$ is a discrete filtered set centred at
$\omega \in \mathbb{C}$,  $\theta \in \mathbb{S}^1$ is a given
direction and $L>0$.  Since $\Omega_L$ is a
finite set, observe that 
${\Omega_L \cap \sect{_\omega^L}(I) = \Omega_L \cap ]\omega, \omega+e^{i\theta}L[}$ when
$I=]-\alpha+\theta, \theta+\alpha[$ with $\alpha>0$ chosen small enough.

\begin{defi}
Let  $\Omega_\star$ be a discrete filtered set centred at
$\omega \in \mathbb{C}$,  $\theta \in \mathbb{S}^1$ and $L>0$. We
denote $\Omega_L^\star(\theta) = \Omega_L \cap ]\omega, \omega + e^{i\theta }L[$.
One says that $\alpha \in ]0,\frac{\pi}{2}[$ is 
a $\Omega_\star(\theta,L)$-angle if
$\Omega_L \cap \sect{_\omega^L}(I) = \Omega_L^\star(\theta)$
with $I=]-\alpha+\theta, \theta+\alpha[$.
\end{defi}

\begin{defi}\label{Allpaththeta}
Let  $\Omega_\star$ be a discrete filtered set centred at
$\omega \in \mathbb{C}$, $\theta \in \mathbb{S}^1$  and $L>0$.
We denote by $\mathfrak{R}(\Omega_\star, \theta,L)$ the set of
piecewise $\mathcal{C}^1$  paths $\lambda$ that satisfy the conditions:
\begin{itemize}
\item $\underline{\lambda}(0) = \omega$ and $\mathcal{L}_\lambda < L$;
\item  for every $t\in[0,1]$,  the right and left derivatives
  $\underline{\lambda}^\prime(t)$ do not  vanish;
\item there exists a $\Omega_\star(\theta,L)$-angle $\alpha \in
  ]0,\frac{\pi}{2}[$  such that for every $t \in [0,1]$,
${\mathrm{arg}  \, \lambda^\prime(t)  \in
  ]-\alpha+\theta, \theta +\alpha[}$.
\end{itemize}
\end{defi}

Remark that, apart from its origin, a path 
$\lambda \in \mathfrak{R}(\Omega_\star, \theta, L)$
stays in an open sector of the form
 $\sect{_\omega^L}(I)$, $I=]-\alpha+\theta, \theta+\alpha[$, with a
$\alpha$  a $\Omega_\star(\theta,L)$-angle. Moreover $\lambda$ always moves
forward in that sector.

\begin{prop}\label{propglimp}
Let  $\Omega_\star$ be a discrete filtered set centred at
$\omega_0 \in \mathbb{C}$ and $\theta \in \mathbb{S}^1$ a direction. 
There exists a uniquely defined discrete and closed set
$\mathrm{GLIMP}_{\Omega_\star}^\star(\theta) \subset \mathbb{C}$ that satisfies the
following conditions for any $L>0$:
\begin{itemize}
\item $\displaystyle
\mathrm{GLIMP}_{\Omega_\star}^\star(\theta,L)  \subset \Omega_L^\star(\theta)$, where
${\mathrm{GLIMP}_{\Omega_\star}^\star(\theta,L) =
  \mathrm{GLIMP}_{\Omega_\star}^\star(\theta) \cap D(0,L)}$;
\item  any path
belonging to $\mathfrak{R}(\Omega_\star, \theta, L)$ that
circumvents to the right or the left the set
$\mathrm{GLIMP}_{\Omega_\star}^\star(\theta,L)$, can be lifted on the
Riemann surface $(\mathscr{R}_{\Omega_\star},\mathfrak{p})$ 
with respect to $\mathfrak{p}$ from $\mathfrak{p}^{-1} (\omega_0)$.
\item when at least one point is removed from $\mathrm{GLIMP}_{\Omega_\star}^\star(\theta)$,
  then the above property is no more satisfied.
\end{itemize}
\end{prop}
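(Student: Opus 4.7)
The plan is to construct $\mathrm{GLIMP}_{\Omega_\star}^\star(\theta)$ explicitly and verify all three properties; uniqueness then follows from a short minimality argument.

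For uniqueness, suppose $G_1$ and $G_2$ both satisfy (1)--(3). If $G_1 \neq G_2$, pick $\omega$ in the symmetric difference at minimal distance $d$ from $\omega_0$; without loss of generality $\omega \in G_1 \setminus G_2$. By property (3) applied to $G_1$, there is a path $\lambda \in \mathfrak{R}(\Omega_\star, \theta, L)$ (for suitable $L>d$) that circumvents $G_1 \setminus \{\omega\}$ but does not lift. The path $\lambda$ must actually meet $\omega$, since otherwise it would circumvent $G_1$ and would lift by (2). By property (1) applied to both sets, all points of $G_1 \cup G_2$ lie on the single ray $]\omega_0, \omega_0 + e^{i\theta}L[$, so no point of $G_2 \setminus G_1$ can share the distance $d$ with $\omega$ except $\omega$ itself; by minimality of $d$, $\lambda$ avoids $G_2 \setminus G_1$ entirely and thus circumvents $G_2$, contradicting (2) for $G_2$.

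For existence, enumerate the finite set $\Omega_L^\star(\theta) = \{\omega_1, \ldots, \omega_{N(L)}\}$ by increasing distance to $\omega_0$. By Definition~\ref{Allpaththeta}, any $\lambda \in \mathfrak{R}(\Omega_\star, \theta, L)$ is forward-monotone in direction $\theta$ within a narrow sector, so it can meet the $\omega_k$ only in increasing order of $k$ and must pass on a definite side of each. I would define $\mathrm{GLIMP}_{\Omega_\star}^\star(\theta, L)$ inductively on $k$: include $\omega_k$ iff there exists $\lambda \in \mathfrak{R}(\Omega_\star, \theta, L)$ avoiding the previously included $\omega_j$ ($j<k$) whose attempted continuation past distance $|\omega_k - \omega_0|$ is genuinely obstructed by $\omega_k$ when lifted to $\mathscr{R}_{\Omega_\star}$. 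Only finitely many left/right choices enter at each stage, so the definition is effective; coherence as $L$ grows follows from the monotonicity $\Omega_{L_1} \subseteq \Omega_{L_2}$.

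Property (1) is built into the construction. For property (2), take any $\lambda \in \mathfrak{R}(\Omega_\star, \theta, L)$ circumventing $\mathrm{GLIMP}_{\Omega_\star}^\star(\theta, L)$: its range meets $\Omega_L$ only at non-glimpsed, hence removable, points (Definition~\ref{removablepoint}). I would decompose $\lambda$ into finitely many segments between such encounters, perform a small $\mathcal{C}^1$ transverse perturbation of each segment, staying in $\mathfrak{R}(\Omega_\star, \theta, L)$, so as to avoid $\Omega_L$ entirely and thus become $\Omega_\star$-allowed and liftable to $\mathscr{R}_{\Omega_\star}$. The lifts glue across the removable points via the étalé structure of $\mathscr{R}_{\Omega_\star}$ (Proposition~\ref{PFS-RS}) and uniqueness of lifting (as in the proof of Proposition~\ref{Propequiv}), yielding a lift of $\lambda$. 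Property (3) is immediate: each $\omega_k \in \mathrm{GLIMP}$ carries, by construction, a witness path that fails to lift once $\omega_k$ is no longer required to be avoided. Discreteness and closedness in $\mathbb{C}$ follow from $\mathrm{GLIMP}_{\Omega_\star}^\star(\theta) \cap D(0, L) \subseteq \Omega_L$ being finite for every $L>0$.

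The main obstacle will be the sufficiency direction of property (2), specifically the gluing step: one must justify that both the ``pass on the left'' and ``pass on the right'' deformations around a non-glimpsed point of $\Omega_L$ produce perturbed paths whose lifts end in the same fiber of $\mathscr{R}_{\Omega_\star}$. This is precisely the content of removability, and it holds here because the monotonicity constraint in Definition~\ref{Allpaththeta} rules out any winding around the point that could otherwise distinguish the two deformations and place their lifts on different sheets.
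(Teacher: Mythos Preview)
Your witness-based strategy is plausible, but it bypasses the structural observation that makes the paper's proof work, and this creates a real gap at your coherence step. The paper does not define glimpsed points by the existence of an obstructed path. Instead it organises the construction along the jump values $L_0<L_1<\cdots$ of the map $L\mapsto\Omega_L^\star(\theta)$ and uses the following concrete dichotomy: a point $\omega$ on the ray at distance $d=|\omega-\omega_0|$ is \emph{removable} whenever $\omega\notin\Omega_{L}$ for some $L>d$, because then every path of $\mathfrak{R}(\Omega_\star,\theta,L)$ through $\omega$ already belongs to $\mathfrak{R}_{\Omega_\star}(L)$ (the point is simply absent as an obstacle at that level) and therefore lifts; $\omega$ is \emph{glimpsed} exactly when it appears in $\Omega_L$ for every $L>d$, i.e.\ the only new glimpsed point at step $i$ is the one (if any) sitting at distance precisely $L_{i-1}$. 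With this explicit description, the independence of $\mathrm{GLIMP}_{\Omega_\star}^\star(\theta,L)$ from the level $L$ is built in, and property~(2) requires no perturbation-and-gluing argument at all.

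In your proof, by contrast, you claim that ``coherence as $L$ grows follows from the monotonicity $\Omega_{L_1}\subseteq\Omega_{L_2}$''. Monotonicity alone does not suffice: a point $\omega\in\Omega_{L_2}^\star(\theta)\setminus\Omega_{L_1}^\star(\theta)$ with $|\omega-\omega_0|<L_1$ is a fresh candidate at level $L_2$ which your witness criterion might well declare glimpsed, whereas at level $L_1$ it was not even present. Excluding this possibility is precisely the removability argument above (such an $\omega$ is absent from $\Omega_{L_1}$, hence removable), and you have not supplied it. The same omission makes your verification of property~(2) circular: saying non-glimpsed points are removable ``by construction'' only has content once ``genuinely obstructed'' is given an independent meaning, and the paper's meaning is the filtered-set criterion just stated, not the forward-monotonicity of paths in $\mathfrak{R}(\Omega_\star,\theta,L)$ that you invoke in your last paragraph. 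Finally, your uniqueness argument needs a small repair: the witness $\lambda$ for $G_1$ may extend past distance $d$ and hit points of $G_2\setminus G_1$; you should truncate it just beyond $\omega$ before applying property~(2) for $G_2$.
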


\begin{proof}
We show proposition \ref{propglimp} by constructing
$\mathrm{GLIMP}_{\Omega_\star}^\star(\theta)$. \\
If $\bigcup_{L>0}  \Omega_L^\star(\theta)= \emptyset$, 
then $ \mathrm{GLIMP}_{\Omega_\star}^\star(\theta) = \emptyset$. Otherwise,
from the very definition of $\Omega_\star$, one can define an increasing
sequence $0 = L_{-1} < L_0 <  L_1 < L_2 < \cdots $ such that
\begin{itemize}
\item  $\Omega_{L_0}^\star(\theta) = \emptyset$, 
\item  for every $i \in \mathbb{N}^\star$, 
$\Omega_{L_{i}}^\star(\theta) = \Omega_{L_{i-1}}^\star(\theta) \cup
\{\omega_{i_1}, \cdots, \omega_{i_l}\}$ 
with $\{\omega_{i_1}, \cdots, \omega_{i_l}\}$ a finite subset of
$]\omega_0, \omega_0 +e^{i\theta}L_{i-1}]$; 
\item $\Omega_{L}^\star(\theta)=\Omega_{L_{i}}^\star(\theta)$ for every $L \in
]L_{i-1}, L_i]$ and every $i \in \mathbb{N}$.
\end{itemize}
We construct $\mathrm{GLIMP}_{\Omega_\star}^\star(\theta)$ by induction on
$i \in \mathbb{N}$.

\noindent \textbf{Case $i=0$}. Since $\Omega_{L_0}^\star(\theta) = \emptyset$, then 
for every $L \leq L_0$,  every  $\lambda \in \mathfrak{R}(\Omega_\star, L, \theta)$ is
  $\Omega_\star$-allowed and thus
 can be lifted on $\mathscr{R}_{\Omega_{\star}}$ with respect to
 $\mathfrak{p}$ from $\mathfrak{p}^{-1}(\omega)$. Therefore, we set
 $\mathrm{GLIMP}_{\Omega_\star}^\star(\theta,L) = \emptyset$ for $L \leq
 L_0$.

\noindent \textbf{Case $i=1$}. From the above property, if
${\omega \in \Omega_{L_1}^\star(\theta)}$ satisfied $|\omega| <
L_0$, then it  is a removable $\Omega_\star$-point for every path $\lambda \in
\mathfrak{R}(\Omega_\star, \theta, L)$ with $L \leq L_1$. Let us take $L \in
]L_{0}, L_1]$, so that  $\Omega_{L}^\star(\theta)  = \Omega_{L_{1}}^\star(\theta)$:
\begin{itemize}
\item either there is  $\omega  \in
  \Omega_{L_{1}}^\star(\theta)$ of the form $\omega = L_0
  e^{i\theta}$. 
In this case the condition \linebreak
${ \mathrm{GLIMP}_{\Omega_\star}^\star(\theta,L) = \mathrm{GLIMP}_{\Omega_\star}^\star(\theta,L_0) \cup
\{\omega\} }$ is both needed and sufficient  so as to ensure that for every $L
\leq L_1$, any path $\lambda \in \mathfrak{R}(\Omega_\star, 
\theta, L)$  that  circumvents $\mathrm{GLIMP}_{\Omega_\star}^\star(\theta,L)$
to the right or the left can be lifted on 
$\mathscr{R}_{\Omega_{\star}}$;
\item or  we set 
 ${\mathrm{GLIMP}_{\Omega_\star}^\star(\theta,L) =
 \mathrm{GLIMP}_{\Omega_\star}^\star(\theta,L_0) = \emptyset}$.
\end{itemize}

\noindent \textbf{Induction}. Pick some $i \in \mathbb{N}^\star$ and suppose that the
  following properties are valid  for every integer $j \in [1, i]$:
\begin{itemize}
\item for every $ L \in
  ]L_{j-1},L_j]$,  ${\mathrm{GLIMP}_{\Omega_\star}^\star(\theta,L) =
\mathrm{GLIMP}_{\Omega_\star}^\star(\theta,L_j) \subset
\Omega_{L_j}^\star(\theta)}$;
\item 
  $\mathrm{GLIMP}_{\Omega_\star}^\star(\theta,L_j) \cap ]\omega_0, \omega_0+
  e^{i\theta}L_{j-1}[ = \mathrm{GLIMP}_{\Omega_\star}^\star(\theta,L_{j-1})$;
\item  for every $L \leq L_i$, any path $\lambda  \in \mathfrak{R}(\Omega_\star, L,
\theta)$  that circumvents   $\mathrm{GLIMP}_{\Omega_\star}^\star(\theta,L)
$  to the right or the left, can be lifted on 
$\mathscr{R}_{\Omega_{\star}}$ with respect to $\mathfrak{p}$ from
$\mathfrak{p}^{-1}(\omega_0)$. 
\end{itemize}
From these properties,
every ${\omega \in \Omega_{L_{i+1}}^\star(\theta) \setminus
\mathrm{GLIMP}_{\Omega_\star}^\star(\theta,L_i)}$  such that $|\omega| <
L_i$ is a removable $\Omega_\star$-point for every path $\lambda \in
\mathfrak{R}(\Omega_\star, \theta, L)$ with $L \leq L_{i+1}$. 
From the fact that $\Omega_{L,\theta}^\star  =
\Omega_{L_{i+1},\theta}^\star$ for every $L \in ]L_{i}, L_{i+1}]$:
\begin{itemize}
\item either there is $\omega 
\in \Omega_{L_{i+1}}^\star(\theta)$ of the form $\omega= L_i
e^{i\theta}$. 
In that case one sets
$\mathrm{GLIMP}_{\Omega_\star}^\star(\theta,L) = \mathrm{GLIMP}_{\Omega_\star}^\star(\theta,L_i) \cup
\{\omega\}$ for $L \in ]L_i, L_{i+1}]$ and this provides a necessary
and sufficient condition to ensure that 
 any path  ${\lambda  \in \mathfrak{R}(\Omega_\star,\theta,L)}$ that
 circumvents 
$\mathrm{GLIMP}_{\Omega_\star}^\star(\theta,L)$  to the right or the left,
can be lifted on  $\mathscr{R}_{\Omega_{\star}}$,  for every $L \leq
L_{i+1}$;
\item or we simply set 
$\mathrm{GLIMP}_{\Omega_\star}^\star(\theta,L) =
\mathrm{GLIMP}_{\Omega_\star}^\star(\theta,L_i)$ for $L \in ]L_i, L_{i+1}]$.
\end{itemize}
This ends the proof.
\end{proof}

\begin{defi}\label{defseengl}
Let  $\Omega_\star$ be a discrete filtered set centred at
$\omega_0 \in \mathbb{C}$, $\theta \in \mathbb{S}^1$.  The discrete
and closed set\footnote{
The symbol $\prec$ stands for the total order on
$[\omega_0,
\omega_0+e^{i\theta}\infty[$ induced by $r \in [0,\infty] \mapsto
\omega_0 +re^{i\theta} \in [\omega_0,\omega_0+e^{i\theta}\infty[$. }
${\mathrm{GLIMP}_{\Omega_\star}^\star(\theta) = \{\omega_i \in ]\omega_0,
\omega_0+e^{i\theta}\infty[, \; \omega_0 \prec \omega_{1} \prec \omega_{2} \cdots \}}$
given by  proposition \ref{propglimp}
is the set of \textbf{glimpsed} $\Omega_\star$-points in the direction $\theta$. The
glimpsed point $\omega_1$  is the  \textbf{seen} $\Omega_\star$-point
in the direction~$\theta$. The \textbf{completed set of glimpsed}
$\Omega_\star$-points $\mathrm{GLIMP}_{\Omega_\star}(\theta)$ in the direction $\theta$ is defined by 
${\mathrm{GLIMP}_{\Omega_\star}(\theta) =
\mathrm{GLIMP}_{\Omega_\star}^\star(\theta) \cup \{\omega_0\}}$.
\end{defi}

\begin{rema}
The notion of glimpsed point can be defined in a
simpler way but the presentation we have made here is fitted
to the methods that we develop in the paper.
\end{rema}

\section{Endless continuability}\label{CNP-s-end}

\subsection{Endless Riemann surface}

\begin{defi}[Endless Riemann surface]\label{endRS}
A Riemann surface $(\mathscr{R}, \mathfrak{p})$, given as an \'etal\'e space on
$\mathbb{C}$, is said to be \textbf{endless} if for every $\zeta \in
\mathscr{R}$,  there
exists a discrete filtered set $\Omega_\star(\zeta)$ centred at
$\zetadot = \mathfrak{p}(\zeta)$  so that 
every $\Omega_\star(\zeta)$-allowed path can be lifted
  on $\mathscr{R}$ with respect to $\mathfrak{p}$ from $\zeta$.
\end{defi}

\begin{exem}
Let  $\Omega$ be a closed discrete subset of
$\mathbb{C}$. Then the universal covering $\widetilde{\mathbb{C}\setminus \Omega}$
of $\mathbb{C}\setminus \Omega$ is an endless Riemann surface.
\end{exem}

The following result is a direct 
consequence of proposition \ref{Propequiv}.

\begin{prop}
Let $\Omega_\star$ be a discrete filtered set. Then the associated
Riemann surface $(\mathscr{R}_{\Omega_\star}, \mathfrak{p})$ is endless.
\end{prop}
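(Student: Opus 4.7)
The plan is very short: the statement is essentially a restatement of Proposition \ref{Propequiv} once one unwinds Definition \ref{endRS}. So I would simply verify that each clause of the definition of \emph{endless} is already available for $(\mathscr{R}_{\Omega_\star},\mathfrak{p})$.

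First I would recall that by Proposition \ref{PFS-RS} the space $(\mathscr{R}_{\Omega_\star}, \mathfrak{p})$ is an étalé space on $\mathbb{C}$, so the ambient requirement in Definition \ref{endRS} is already satisfied. It then remains to check the path-lifting condition: for every $\zeta \in \mathscr{R}_{\Omega_\star}$ we must exhibit a discrete filtered set $\Omega_\star(\zeta)$ centred at $\zetadot=\mathfrak{p}(\zeta)$ such that every $\Omega_\star(\zeta)$-allowed path lifts from $\zeta$ with respect to $\mathfrak{p}$.

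But this is exactly what Proposition \ref{Propequiv} asserts: given the discrete filtered set $\Omega_\star$ centred at $\omegadot$ and any $\zeta \in \mathscr{R}_{\Omega_\star}$, that proposition produces a discrete filtered set $\Omega_\star(\zeta)$ centred at $\zetadot$ whose allowed paths (starting at $\zetadot$) lift to $\mathscr{R}_{\Omega_\star}$ from $\zeta$ via $\mathfrak{p}$. Hence applying Proposition \ref{Propequiv} pointwise yields the endlessness of $(\mathscr{R}_{\Omega_\star}, \mathfrak{p})$, and there is nothing further to prove.

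Since Proposition \ref{Propequiv} does all the work, there is no real obstacle here; the only thing to be careful about is the base point: for $\zeta=\omega$ the required filtered set is just $\Omega_\star$ itself, while for $\zeta\neq\omega$ one invokes the explicit construction in the proof of Proposition \ref{Propequiv} (which uses the length $L_0$ of a representative of $\zeta$ and sets $\Omega_L(\zeta)=(\Omega_{L_0+L}\cap D(\zetadot,L))\cup\{\zeta\}$ for large $L$). Both cases being already handled, the proof reduces to a single sentence citing Proposition \ref{Propequiv}.
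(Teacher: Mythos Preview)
Your proposal is correct and matches the paper's own approach: the paper states this proposition as a direct consequence of Proposition~\ref{Propequiv} without further argument, and your write-up simply unpacks why Definition~\ref{endRS} is satisfied once Proposition~\ref{Propequiv} (together with Proposition~\ref{PFS-RS} for the \'etal\'e-space requirement) is invoked.
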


\subsection{Endless continuability}

\begin{defi}[Endless continuability]\label{defendlesscont}
A germ of holomorphic functions ${ \widehat{\varphi} \in \mathcal{O}_\omega}$ at $\omega
\in \mathbb{C}$ is 
endlessly continuable on $\mathbb{C}$ if $\widehat{\varphi}$ can be
analytically continued to an endless Riemann surface. One denotes by
$\hat{\mathscr{R}}_{\omega, \mathrm{endl}}$  the space of germ of
holomorphic functions at $\omega$ that are endlessly continuable on
$\mathbb{C}$. When $\omega=0$ we use the abridged notation 
$\hat{\mathscr{R}}_{\mathrm{endl}} = \hat{\mathscr{R}}_{0, \mathrm{endl}}$.
\end{defi}

\begin{prop}
A germ of holomorphic functions $\widehat{\varphi} \in \mathcal{O}_\omega$
at $\omega \in \mathbb{C}$ is endlessly continuable on $\mathbb{C}$ if
and only if  there exists a discrete filtered set
$\Omega_\star$ centred at $\omega$
such that $\widehat{\varphi}$ can be analytically continued along any
$\Omega_\star$-allowed path. 
\end{prop}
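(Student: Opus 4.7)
The plan is to prove the two implications separately, using the machinery already developed in the paper to transform the abstract notion of endless continuability into the concrete data of a discrete filtered set.

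For the sufficient direction, suppose there is a discrete filtered set $\Omega_\star$ centred at $\omega$ such that $\widehat{\varphi}$ extends analytically along every $\Omega_\star$-allowed path. I would build an extension $\widehat{\Phi}$ of $\widehat{\varphi}$ directly on the associated Riemann surface $\mathscr{R}_{\Omega_\star}$ by setting, for any $\zeta = \mathrm{cl}(\lambda)$, the value $\widehat{\Phi}(\zeta)$ to be the value at $\mathfrak{p}(\zeta)$ of the germ obtained by analytic continuation of $\widehat{\varphi}$ along $\lambda$. The crucial step is to check this is well-defined, i.e.\ independent of the representative in the $\Omega_\star$-homotopy class. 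Given two such representatives $\lambda_0, \lambda_1$ and an $\Omega_\star$-homotopy $H$ between them, I would invoke Lemma \ref{RemarksurHomotopy} to produce a finite good cover $(I_i)_{0\le i \le n}$ of $[0,1]$ together with radii $L_i$ such that every intermediate path $H_t$ with $t \in I_i$ lies in $\mathfrak{R}_{\Omega_\star}(L_i)$. Over each $I_i$, the classical monodromy theorem applies: the family of paths $H_t$ has fixed endpoints, it avoids the closed finite set $\Omega_{L_i}$, and analytic continuation of $\widehat{\varphi}$ along each $H_t$ is defined by hypothesis, so the germ produced at the endpoint is constant on $I_i$. Consistency across the overlaps $I_i \cap I_{i+1}$ glues these local constancy statements into equality of the germs for $\lambda_0$ and $\lambda_1$. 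Holomorphy of $\widehat{\Phi}$ on $\mathscr{R}_{\Omega_\star}$ is then immediate from the definition of the basic neighbourhoods $\mathscr{U}$, each of which is mapped homeomorphically onto a disc in $\mathbb{C}$ via $\mathfrak{p}$. By the proposition immediately preceding the statement, $\mathscr{R}_{\Omega_\star}$ is an endless Riemann surface, so $\widehat{\varphi}$ is endlessly continuable.

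For the necessary direction, assume $\widehat{\varphi}$ admits an analytic continuation to an endless Riemann surface $(\mathscr{R},\mathfrak{p})$ and fix a base point $\omega_0 \in \mathfrak{p}^{-1}(\omega)$ at which $\widehat{\varphi}$ is the local germ. Applying Definition \ref{endRS} at the point $\omega_0$ yields a discrete filtered set $\Omega_\star := \Omega_\star(\omega_0)$ centred at $\omega$ such that every $\Omega_\star$-allowed path starting from $\omega$ lifts through $\mathfrak{p}$ to a path on $\mathscr{R}$ starting from $\omega_0$. Given any such $\Omega_\star$-allowed path $\lambda$ with lift $\Lambda$, the function extending $\widehat{\varphi}$ is globally defined on $\mathscr{R}$, hence in particular along $\Lambda$; pushing down via $\mathfrak{p}$ provides the desired analytic continuation of $\widehat{\varphi}$ along $\lambda$.

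The main obstacle is the well-definedness in the first direction: an $\Omega_\star$-homotopy is allowed to pass through paths whose lengths exceed that of its endpoints, so one cannot apply a single monodromy argument in one domain $\mathbb{C} \setminus \Omega_L$. This is precisely why Lemma \ref{RemarksurHomotopy} is needed: it reduces the problem to finitely many standard monodromy steps, each carried out in a complement of a fixed finite set, which can then be patched together.
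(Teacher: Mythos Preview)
Your proof is correct, and the necessary direction matches the paper's exactly. For the sufficient direction the paper takes a slightly different route: rather than verifying well-definedness of $\widehat{\Phi}$ on $\mathscr{R}_{\Omega_\star}$ by hand, it pulls $\widehat{\varphi}$ back to a germ $\Phi$ at the base point of $\mathscr{R}_{\Omega_\star}$, notes that $\Phi$ continues along every lift of an $\Omega_\star$-allowed path, and then invokes the simple connectedness of $\mathscr{R}_{\Omega_\star}$ (Proposition~\ref{PropdefbasicbisB}) to conclude that $\Phi$ extends to a global holomorphic function. Your argument is more explicit and self-contained; the paper's is terser because it offloads the well-definedness issue onto a structural result already in hand.

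One remark: your detour through Lemma~\ref{RemarksurHomotopy} is more machinery than you actually need. By definition every intermediate path $H_t$ of an $\Omega_\star$-homotopy is itself $\Omega_\star$-allowed, so the hypothesis already guarantees that $\widehat{\varphi}$ continues along each $H_t$. The standard homotopy invariance of analytic continuation (if a germ continues along every path of a fixed-endpoint homotopy, the endpoint germ is constant in $t$) then applies directly, with no need to chop $[0,1]$ into subintervals living in a common $\mathfrak{R}_{\Omega_\star}(L_i)$. Your concern that ``one cannot apply a single monodromy argument in one domain $\mathbb{C}\setminus\Omega_L$'' is therefore unfounded: the monodromy theorem does not require the homotopy to sit in a fixed domain, only that continuation exist along each individual path.
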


\begin{proof}
We suppose that $\widehat{\varphi} \in \mathcal{O}_{\omegadot}$ is
endlessly continuable, thus $\widehat{\varphi}$ can be analytically
continued to an endless Riemann surface $(\mathscr{R},
\mathfrak{p})$. This means that there exist  a neighbourhood
$\stackrel{\bullet}{\mathscr{U}} \subset \mathbb{C}$ 
of $\omegadot = \mathfrak{p}(\omega)$ and a neighbourhood
 $\mathscr{U} \subset \mathscr{R}$ of $\omega$ such that 
the restriction $\mathfrak{p}|_{\mathscr{U}} : \mathscr{U} \to
\stackrel{\bullet}{\mathscr{U}}$ is a homeomorphism, and there is a function 
$\Phi$ holomorphic on $\mathscr{R}$ so that $\phi = \Phi \circ
\mathfrak{p}|_{\mathscr{U}}^{-1}$ represents the germ
$\widehat{\varphi}$. By the very definition of an endless Riemann
surface, one can find a discrete  filtered set $\Omega_\star$ centred
at $\omegadot$ so that every $\Omega_\star$-allowed path 
$\lambda$ can be lifted with respect to $\mathfrak{p}$ into a path
$\Lambda$ starting at $\omega$. Since $\Phi$ can be analytically
continued along $\Lambda$, one gets that $\widehat{\varphi}$ can be
analytically continued along $\lambda$ as an upshot.

We now suppose that $\widehat{\varphi} \in \mathcal{O}_{\omegadot}$
can be analytically continued along any $\Omega_\star$-allowed path,
where $\Omega_\star$ is a discrete  filtered set $\Omega_\star$ centred
at $\omegadot$. By proposition \ref{Propequiv}, the Riemann surface
 $(\mathscr{R}_{\Omega_\star}, \mathfrak{p})$ associated with this
 discrete  filtered set is endless. To $\widehat{\varphi}$ is
 associated a germ of holomorphic functions $\Phi$ at $\omega$,
 $\omegadot = \mathfrak{p}(\omega)$ that can be analytically continued
 along the path $\Lambda$ starting from $\omega$ and deduced from any 
  $\Omega_\star$-allowed path $\lambda$. Since
 $\mathscr{R}_{\Omega_\star}$ is simply connected, this implies that
 $\Phi$ can be analytically continued to a function holomorphic on 
$\mathscr{R}_{\Omega_\star}$.
\end{proof}

\begin{defi}\label{defendlesscont-bis}
If $\Omega_\star$ is  a discrete filtered set centred at $\omega$, one
denotes by $\hat{\mathscr{R}}_{\Omega_\star}$ the space of germs of
holomorphic functions  at $\omega$ that can be analytically continued
to the Riemann surface ${\mathscr{R}}_{\Omega_\star}$.
\end{defi}

\subsection{Seen and glimpsed points}\label{sectionseenforvarphi}

We have introduced the notion of glimpsed $\Omega_\star$-points
(definition \ref{defseengl})
associated with a discrete filtered set  $\Omega_\star$ centred at $\omega_0$. If
$\widehat{\varphi}$ is an endless continuable germ at $\omega_0$ that
belongs to $\hat{\mathscr{R}}_{\Omega_\star}$ then, by the very
definition of $\mathrm{GLIMP}_{\Omega_\star}^\star(\theta)$,
$\widehat{\varphi}$ can be  analytically continued along any path that closely
follows the half-line $[\omega_0,
\omega_0+e^{i\theta}\infty[$ in the forward direction, while
circumventing to the right or to the left each point from the set
$\mathrm{GLIMP}_{\Omega_\star}^\star(\theta)$. However, this set is
not always the smaller one and one easily gets the following proposition.

\begin{prop}\label{propdefseenglgerms}
Let  ${ \widehat{\varphi} \in \hat{\mathscr{R}}_{\omega,
    \mathrm{endl}}}$ be an endlessly continuable
germ of holomorphic functions at $\omega \in \mathbb{C}$ and let
$\theta \in \mathbb{S}^1$ be a direction. There exists
a uniquely defined discrete and closed set
$\mathrm{GLIMP}_{\widehat{\varphi}}^\star(\theta) = \{\omega_i \in ]\omega_0,
\omega_0+e^{i\theta}\infty[, \; \omega_0 \prec \omega_{1} \prec \omega_{2} \cdots  \}$
such that:
\begin{itemize}
\item $\widehat{\varphi}$ can be  analytically continued along any path that closely
follows the half-line $[\omega_0,
\omega_0+e^{i\theta}\infty[$ in the forward direction, while circumventing
(to the right or to the left) each point of the set
$\mathrm{GLIMP}_{\widehat{\varphi}}^\star(\theta)$.
\item this property is no more valid if at least one point is removed
  from $\mathrm{GLIMP}_{\widehat{\varphi}}^\star(\theta)$. 
\end{itemize}
If $\Omega_\star$ a discrete filtered set centred at $\omega_0$, 
then ${\mathrm{GLIMP}_{\widehat{\varphi}}^\star(\theta) \subseteq
\mathrm{GLIMP}_{\Omega_\star}^\star(\theta)}$ for any
$\widehat{\varphi}$ belonging to $\hat{\mathscr{R}}_{\Omega_\star}$.
\end{prop}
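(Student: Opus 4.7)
The plan is to adapt the inductive construction of Proposition~\ref{propglimp} to the single germ $\widehat{\varphi}$, retaining only those candidate glimpsed points at which the germ actually obstructs analytic continuation. The starting data is provided by endless continuability: there exists a discrete filtered set $\Omega_\star$ centred at $\omega_0$ with $\widehat{\varphi} \in \hat{\mathscr{R}}_{\Omega_\star}$. Applying Proposition~\ref{propglimp} to such $\Omega_\star$ yields the discrete closed set $\mathrm{GLIMP}_{\Omega_\star}^\star(\theta) = \{\omega_1^{(\Omega)} \prec \omega_2^{(\Omega)} \prec \cdots\}$, which serves as the ambient candidate set from which $\mathrm{GLIMP}_{\widehat{\varphi}}^\star(\theta)$ will be extracted.

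I then construct $\mathrm{GLIMP}_{\widehat{\varphi}}^\star(\theta)$ inductively as a subset of $\mathrm{GLIMP}_{\Omega_\star}^\star(\theta)$. Suppose the first $k$ candidates have produced a (possibly empty) selection $\omega_1 \prec \cdots \prec \omega_m$ of true obstruction points. By the induction hypothesis, $\widehat{\varphi}$ can be continued along any path closely following $[\omega_0, \omega_{k+1}^{(\Omega)}[$ and circumventing $\omega_1, \ldots, \omega_m$ to the right or left. Each of the finitely many right/left patterns produces a holomorphic branch on a small punctured disc $D(\omega_{k+1}^{(\Omega)}, r)^\star$; I include $\omega_{k+1}^{(\Omega)}$ in $\mathrm{GLIMP}_{\widehat{\varphi}}^\star(\theta)$ if and only if at least one of these branches fails to extend holomorphically across $\omega_{k+1}^{(\Omega)}$, using Riemann's removable singularity criterion on each branch.

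Checking the announced properties is then essentially bookkeeping. If $\omega_{k+1}^{(\Omega)}$ is discarded, every associated branch extends holomorphically across it, so any path that closely follows the half-line need not circumvent $\omega_{k+1}^{(\Omega)}$ and still carries a valid analytic continuation of $\widehat{\varphi}$; conversely if $\omega_{k+1}^{(\Omega)}$ is kept, at least one branch is genuinely singular there, hence removing it would falsify the continuation property — giving both the extension property and minimality. As a subset of $\mathrm{GLIMP}_{\Omega_\star}^\star(\theta)$, the set $\mathrm{GLIMP}_{\widehat{\varphi}}^\star(\theta)$ is automatically discrete and closed in $\mathbb{C}$, and the final inclusion of the proposition is immediate. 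Uniqueness follows by the usual symmetric argument: if two sets $S, S'$ both satisfy the two conditions and $\omega \in S \setminus S'$, then the continuation property holding along paths circumventing only $S'$ would let us discard $\omega$ from $S$, contradicting the minimality of $S$.

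The only delicate point is the Riemann-extension step in the induction: near each candidate $\omega_{k+1}^{(\Omega)}$ one must speak simultaneously of all the holomorphic branches obtained from the finitely many right/left circumvention patterns of the already-selected points $\omega_1, \ldots, \omega_m$, and verify that \emph{no} branch having a singularity at $\omega_{k+1}^{(\Omega)}$ is exactly what allows the circumvention of $\omega_{k+1}^{(\Omega)}$ to be dropped without losing single-valuedness and holomorphy of the continuation on a neighbourhood of that point. Once this is handled cleanly, discreteness, closedness, the inclusion $\mathrm{GLIMP}_{\widehat{\varphi}}^\star(\theta) \subseteq \mathrm{GLIMP}_{\Omega_\star}^\star(\theta)$, and uniqueness all come for free.
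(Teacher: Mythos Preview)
Your approach is correct and is precisely the natural elaboration the paper leaves implicit: the paper does not prove this proposition explicitly, remarking only that ``one easily gets'' it from the construction of Proposition~\ref{propglimp}, and your inductive pruning of $\mathrm{GLIMP}_{\Omega_\star}^\star(\theta)$ down to the genuine singular points of $\widehat{\varphi}$ is exactly that construction spelled out. One small sharpening of your uniqueness argument: to make ``the continuation property for $S'$ lets us discard $\omega$ from $S$'' watertight, take $\omega$ to be the \emph{first} point of the symmetric difference $S \triangle S'$, so that $S$ and $S'$ agree below $\omega$ and the branches arriving there coincide; then $\omega \notin S'$ forces every such branch to extend holomorphically across $\omega$, whence going straight through $\omega$ yields the same germ as circumventing it, and $S \setminus \{\omega\}$ still satisfies the continuation property --- contradicting minimality of $S$.
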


\begin{defi}\label{defseenglgerms}
The elements of $\mathrm{GLIMP}_{\widehat{\varphi}}^\star(\theta)$  are called the \textbf{glimpsed}
singular points in the direction $\theta \in \mathbb{S}^1$ for the endlessly
continuable germ ${ \widehat{\varphi} \in \hat{\mathscr{R}}_{\omega,
    \mathrm{endl}}}$.  Specifically, $\omega_1$ is
the \textbf{seen} singular point in the direction $\theta$ for $\widehat{\varphi}$.
\end{defi}

\subsection{Continuability without cut}

We complete this Sect. with a brief comparison to  Ecalle's endless
continuability.  

\begin{defi}[Riemann surface without cut \cite{Ec85}]\label{defendcut}
Let $(\mathscr{R}, \mathfrak{p})$ be
a Riemann surface  given as an \'etal\'e space on
$\mathbb{C}$. This surface is said to be \textbf{without cut} if for every $\omega \in
\mathscr{R}$,  there exists a closed and
discrete set $\mathrm{sing} (\omega) \subset \mathbb{C}$ that
satisfies the following properties. Introducing $\omegadot = \mathfrak{p} (\omega)$: 
\begin{enumerate}
\item \label{pro1} 
if the line segment $[\omegadot, \omegadot^\prime] \subset \mathbb{C}$ does
not meet $\mathrm{sing}(\omega)$, then $[\omegadot, \omegadot^\prime]$ can be
lifted   homeomorphically on $\mathscr{R}$ with respect to $\mathfrak{p}$ from $\omega$.
\item \label{pro2} for every line segment $[\omegadot, \omegadot^\prime] \subset \mathbb{C}$ that
  meets the points $\omegadot_1, \cdots, \omegadot_r$ of
  $\mathrm{sing}(\omega)$,  there exists an open rectangle $W$ neighbourhood of
  $[\omegadot, \omegadot^\prime]$ such that each of the
  $2^r$ simply connected open sets $W_j$ deduced from $W$ by making lateral cuts at $\omegadot_1,
  \cdots, \omegadot_r$ (see Fig. \ref{Yafei-E-3}) can be lifted
  homeomorphically on $\mathscr{R}$ with respect to $\mathfrak{p}$ to an open set $ \mathcal{W}_j
  \subset \mathscr{R}$ containing $\omega$.
\item \label{condcut3} if at least one point is removed from $\mathrm{sing}(\omega)$,
  then properties \ref{pro1}-\ref{pro2} are no more satisfied.
\end{enumerate}
One says that the  point $\omegadot_1 \in
\mathrm{sing}(\omega)$  is \textbf{seen} from $\omega$ if
${[\omegadot, \omegadot_1] \cap \mathrm{sing}(\omega) = \{\omegadot_1\}}$.
Otherwise the  points $\omegadot_j \in \mathrm{sing}(\omega)$ 
are \textbf{glimpsed} from $\omega$.
\end{defi}

\begin{figure}[thp]
\centering\includegraphics[scale=.5]{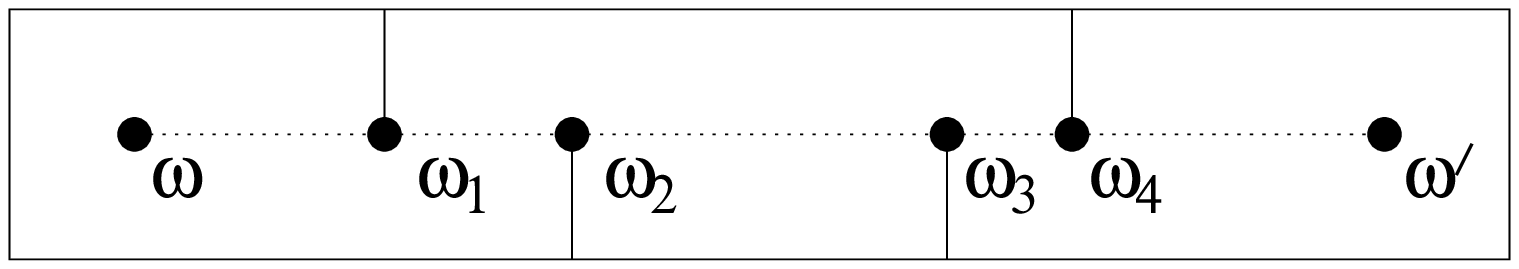}\\
  \centering\caption{}\label{Yafei-E-3}
\end{figure}

We note that in Definition \ref{defendcut}, condition \ref{condcut3}
is added so as to define the seen and glimpsed singular points.

\begin{defi}[Continuability without cut]
A germ of holomorphic functions $\widehat{\varphi} \in \mathcal{O}_0$ at $0
\in \mathbb{C}$ is said to be
\textbf{analytically continuable without cut} on $\mathbb{C}$ if its Riemann surface is
without cut.
\end{defi}

There is the following relationship between endless continuability in
the sense of definition \ref{endRS} and continuability without cut.

\begin{prop}
Let $(\mathscr{R}, \mathfrak{p})$ be a  Riemann surface. If $\mathscr{R}$
is endless, then $\mathscr{R}$ is without cut.
\end{prop}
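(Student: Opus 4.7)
The plan is to construct, for each $\omega \in \mathscr{R}$ with $\omegadot := \mathfrak{p}(\omega)$, the closed discrete set $\mathrm{sing}(\omega) \subset \mathbb{C}$ required by Definition \ref{defendcut} out of the directional glimpsed sets of Proposition \ref{propglimp}. Endlessness provides a discrete filtered set $\Omega_\star(\omega)$ centred at $\omegadot$ such that every $\Omega_\star(\omega)$-allowed path lifts from $\omega$, and I would set
\[
\mathrm{sing}(\omega) := \bigcup_{\theta \in \mathbb{S}^1} \mathrm{GLIMP}_{\Omega_\star(\omega)}^\star(\theta).
\]
I would first check that $\mathrm{sing}(\omega)$ is closed and discrete. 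Proposition \ref{propglimp} yields, for any $R > 0$ and any $\theta$, the inclusion $\mathrm{GLIMP}_{\Omega_\star(\omega)}^\star(\theta) \cap D(\omegadot, R) \subset \Omega_R(\omega) \cap \, ]\omegadot, \omegadot + e^{i\theta}R[$. Since every point of $\mathbb{C} \setminus \{\omegadot\}$ lies on exactly one such ray from $\omegadot$, taking the union over $\theta$ gives $\mathrm{sing}(\omega) \cap D(\omegadot, R) \subset \Omega_R(\omega) \setminus \{\omegadot\}$, which is finite by Definition \ref{FiltDis}. This is the key structural point: even though $\bigcup_L \Omega_L(\omega)$ may be everywhere dense in $\mathbb{C}$, its glimpsed subset is locally finite.

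Next I would verify property 1. For a segment $[\omegadot, \omegadot']$ disjoint from $\mathrm{sing}(\omega)$, setting $\theta := \arg(\omegadot' - \omegadot)$ and $L > |\omegadot' - \omegadot|$, the segment (suitably parametrised) lies in $\mathfrak{R}(\Omega_\star(\omega), \theta, L)$ and avoids $\mathrm{GLIMP}_{\Omega_\star(\omega)}^\star(\theta, L)$, so by Proposition \ref{propglimp} together with endlessness it lifts homeomorphically from $\omega$. For property 2, when $[\omegadot, \omegadot']$ meets $\mathrm{sing}(\omega)$ exactly at $\omegadot_1, \dots, \omegadot_r$, I would use closed discreteness of $\mathrm{sing}(\omega)$ to pick a thin open rectangle $W$ around $[\omegadot, \omegadot']$ with $W \cap \mathrm{sing}(\omega) = \{\omegadot_1, \dots, \omegadot_r\}$. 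Each of the $2^r$ simply connected pieces $W_j$ obtained by lateral cuts at the $\omegadot_i$'s then avoids $\mathrm{sing}(\omega)$; a piecewise-smooth path inside $W_j$ from $\omegadot$, essentially following $[\omegadot, \omegadot']$ but circumventing each $\omegadot_i$ on the prescribed side, lies in $\mathfrak{R}(\Omega_\star(\omega), \theta, L)$ for a positive $\Omega_\star(\omega)(\theta, L)$-angle (which exists because $\Omega_L(\omega)$ is finite), and lifts by Proposition \ref{propglimp}. Simple connectedness of $W_j$ combined with the fact that $\mathfrak{p}$ is \'etal\'e should then promote this to a homeomorphic lift $W_j \to \mathcal{W}_j$ onto an open neighbourhood $\mathcal{W}_j \subset \mathscr{R}$ of $\omega$.

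For minimality, each $\omegadot_j \in \mathrm{sing}(\omega)$ is glimpsed in the direction $\theta_j := \arg(\omegadot_j - \omegadot)$ and, by the third clause of Proposition \ref{propglimp}, is essential there, so removing it would force some segment in direction $\theta_j$ through $\omegadot_j$ to lift homeomorphically from $\omega$ --- impossible because $\omegadot_j$ is then a genuine obstruction interior to the segment. The main obstacle is the last step in property 2: promoting a single path-lift to a homeomorphic lift of the whole region $W_j$. The subtlety is that $W_j$ may still meet points of $\bigcup_L \Omega_L(\omega) \setminus \mathrm{sing}(\omega)$; however such non-glimpsed points are removable $\Omega_\star(\omega)$-points in the sense of Definition \ref{removablepoint} for paths of length below their filtration level, hence do not obstruct lifting the piecewise-smooth paths spanning $W_j$, and simple connectedness of $W_j$ combined with the local-homeomorphism property of $\mathfrak{p}$ glues those path-lifts into the required global homeomorphism.
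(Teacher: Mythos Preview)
Your argument is correct, but it takes a different and more structural route than the paper's own proof.

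The paper does not build a global set $\mathrm{sing}(\omega)$ at all. Given a segment $[\omegadot,\omegadot']$ of length $l$, it simply picks some $L>l$ and works with the \emph{full} filtered slice $\Omega_L$: either the segment misses $\Omega_L\setminus\{\omegadot\}$ and is therefore in $\mathfrak{R}_{\Omega_\star}(L)$, or it meets finitely many points $\omegadot_1,\dots,\omegadot_r\in\Omega_L$, in which case a thin rectangle $W$ is chosen so that $W\cap\Omega_L=\{\omegadot,\omegadot_1,\dots,\omegadot_r\}$. Then every path in each slit piece $W_j$ is genuinely $\Omega_\star$-allowed (length $<L$, avoids $\Omega_L$), so lifts immediately; injectivity of the lift comes from $\Omega_\star$-homotopies inside the simply connected $W_j$. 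Condition~3 is not verified --- the paper explicitly remarks that it is only a normalisation used to single out seen and glimpsed points.

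Your approach instead defines $\mathrm{sing}(\omega)$ once and for all as the union of directional glimpsed sets, checks closedness/discreteness via $\mathrm{sing}(\omega)\cap D(\omegadot,R)\subset\Omega_R(\omega)$, and handles minimality head-on. The price is exactly the obstacle you identified: your $W_j$ may still contain non-glimpsed points of $\Omega_L$, so paths in $W_j$ need not be $\Omega_\star$-allowed, and you must fall back on Proposition~\ref{propglimp} (paths in $\mathfrak{R}(\Omega_\star,\theta,L)$ circumventing only the glimpsed points lift) together with the removable-point notion. This works provided $W$ is thin enough that (i) $W\cap\Omega_L$ lies on the segment, and (ii) the angular width of $W$ is below a $\Omega_\star(\theta,L)$-angle, so that the forward paths you use really sit in $\mathfrak{R}(\Omega_\star,\theta,L)$; one small caveat is that points of $W_j$ behind $\omegadot$ cannot be reached by such forward paths, but they lie in $D(\omegadot,\rho_{\Omega_\star}(\omegadot))$ once $W$ is thin enough and lift trivially.

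In short: the paper's route is quicker for the lifting (no removable-point subtleties, since it cuts at \emph{all} points of $\Omega_L$ on the segment), while yours is more faithful to the letter of Definition~\ref{defendcut}, producing an honest global closed discrete $\mathrm{sing}(\omega)$ and addressing minimality.
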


\begin{proof}
We assume that the  Riemann surface $(\mathscr{R}, \mathfrak{p})$ is
endless. We consider a point  $\omega \in
\mathscr{R}$, $\omegadot = \mathfrak{p} (\omega)$ :   there
exists a discrete filtered set $\Omega_\star$ centred at $\omegadot$  such that
every $\Omega_\star$-allowed path $\lambda \in
\mathfrak{R}_{\Omega_\star}^\star$   can be lifted
  on $\mathscr{R}$  from $\omega$ with respect to $\mathfrak{p}$. We consider the line segment
  $[\omegadot, \omegadot^\prime]  \subset \mathbb{C}$ and  $L > l>0$  where $l= |\omegadot^\prime -
  \omegadot|$.
\begin{enumerate}
\item  Assume  that  the line segment $[\omegadot, \omegadot^\prime]$ does not
  meet $\Omega_L$ apart from $\omegadot$.  Then the path $\lambda : t \in
  [0,1] \mapsto \omegadot + t(\omegadot^\prime - \omegadot)$ belongs to
  $\mathfrak{R}_{\Omega_\star}(L)$  and thus can be lifted by
  $\mathfrak{p}$ from $\omega$.
\item  Assume  that the line segment $[\omegadot, \omegadot^\prime]$
  meets the points $\omegadot, \omegadot_1, \cdots, \omegadot_r$ of
  $\Omega_L$. Consider an open rectangle $W$ centred on (and thus neighbourhood of)
  $[\omegadot, \omegadot^\prime]$, of length  $l+2l^\prime$ and width
$2l^\prime$ where $l^\prime>0$ satisfies $l+2l^\prime >L$. For
$l^\prime$ small enough one has $W \cap \Omega_L = \{\omegadot, \omegadot_1, \cdots,
\omegadot_r\}$ and for every ${\zetadot  \in W \setminus \{\omegadot_1, \cdots,
\omegadot_r\}}$,  there exists a path $\lambda \in
\mathfrak{R}_{\Omega_\star}(L)^\star$ such that $\lambda([0,1]) \subset
W \setminus \{\omegadot_1, \cdots,
\omegadot_r\}$, ${\lambda(0) = \omegadot}$  and $\lambda(1) = \zetadot$.

Now assume that $W_j$ is one of the  $2^r$ simply connected open sets
$W_j$ deduced  from $W$ by making lateral cuts at $\omegadot_1,  \cdots,
\omegadot_r$ (see Fig. \ref{Yafei-E-3}). 
Then for every $\zetadot \in W_j$, there exists a path $\lambda \in
\mathfrak{R}_{\Omega_\star}(L)$ such that $\lambda([0,1]) \subset
W_j$, $\lambda(0) = \omegadot$  and $\lambda(1) = \zetadot$. This path can
be lifted with respect
to $\mathfrak{p}$ into a path starting from $\omega$ and ending at a point $\zeta$
such that $\mathfrak{p}(\zeta) = \zetadot$.  We note $\mathcal{W}_j$ the set of  these
points $\zeta$.\\
By its very definition, $\mathcal{W}_j$ is an open arcconnected subset of
$\mathscr{R}$ such that $\mathfrak{p}
(\mathcal{W}_j ) = W_j$. Moreover $\mathfrak{p}_{|\mathcal{W}_j}$ is
injective. Indeed, if one considers two paths $\lambda_0, \lambda_1 \in
\mathfrak{R}_{\Omega_\star}(L)$ such that $\lambda_1([0,1]) \subset
W_j$ and  $\lambda_2([0,1]) \subset
W_j$ and ending at the same point $\zetadot \in W_j$, 
one can easily constructs a $\Omega_\star$-homotopy ${\Gamma : t \in [0,1]
\mapsto \Gamma_t \in \mathfrak{R}_{\Omega_\star}(L)}$ between
$\underline{\lambda}_0$ and $\underline{\lambda}_1$, because $W_j$ is simply connected.
Finally,  since
$\mathfrak{p}$ is a local homeomorphism,   $\mathfrak{p}_{|\mathcal{W}_j}$ is a
homeomorphism between $\mathcal{W}_j$ and $W_j$.
\end{enumerate}
\end{proof}

\section{Endless continuability and convolution
  product}\label{mainsectionend}

For two germs $\widehat{\varphi}, \widehat{\psi} \in \mathcal{O}_\omega$ of holomorphic
functions at $\omega \in \mathbb{C}$, their convolution product
$\widehat{\varphi} \ast \widehat{\psi} \in \mathcal{O}_\omega$ is the germ of holomorphic
functions  at  $\omega$ defined  by the integral,
\begin{equation}\label{conprod1}
\widehat{\varphi} \ast \widehat{\psi} (\zeta) = \int_\omega^\zeta
\widehat{\varphi}(\eta)  \widehat{\psi}(\zeta+\omega-\eta) d\eta,
\end{equation}
for $\zeta$ close enough to $\omega$. 

\subsection{Endless continuability, stability under convolution
  product}

We state the main result of the paper.

\begin{theo}\label{thmendlessED}
Let $\widehat{\varphi}, \widehat{\psi} \in  \hat{\mathscr{R}}_{\omega,
    \mathrm{endl}}$ be two endlessly continuable germs of holomorphic
  functions at $\omega$. Then their convolution product $\widehat{\varphi}
  \ast \widehat{\psi}$ is endlessly continuable as well, $\widehat{\varphi}
  \ast \widehat{\psi} \in  \hat{\mathscr{R}}_{\omega, \mathrm{endl}}$.
More precisely, let $\Omega_\star$ and
$\Omega_\star^\prime$ be two discrete filtered sets centred at
$\omega$. If $\widehat{\varphi} \in
\widehat{\mathscr{R}}_{\Omega_\star}$ and $\widehat{\psi} \in
\widehat{\mathscr{R}}_{\Omega_\star^\prime}$, 
then their convolution product  $\widehat{\varphi}
  \ast \widehat{\psi}$ belongs 
 to $\widehat{\mathscr{R}}_{\Omega_\star \ast \Omega_\star^\prime}$
 where $\Omega_\star \ast \Omega_\star^\prime$ is the  fine
sum of the two discrete filtered sets.
\end{theo}

This theorem has an obvious but interesting corollary.

\begin{coro}
Let $\Omega_\star$ be a discrete filtered sets centred at
$\omega$ and $\widehat{\varphi} \in
\widehat{\mathscr{R}}_{\Omega_\star}$. Then the iterated convolution
products $\widehat{\varphi}^{\ast n}$ belong to
$\widehat{\mathscr{R}}_{\Omega_\star^\infty}$ with
$\Omega_\star^\infty$ the saturated of $\Omega_\star$.
\end{coro}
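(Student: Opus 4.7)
The plan is to proceed by induction on $n$, using Theorem \ref{thmendlessED} as the inductive engine and then collapsing the outcome into $\widehat{\mathscr{R}}_{\Omega_\star^\infty}$ via the monotonicity principle $\Omega_\star \subseteq \Omega_\star' \Rightarrow \widehat{\mathscr{R}}_{\Omega_\star} \subseteq \widehat{\mathscr{R}}_{\Omega_\star'}$ (a path that avoids the larger filtered set is \emph{a fortiori} allowed for the smaller one, so every germ continuable along all allowed paths of the smaller set is continuable along all allowed paths of the larger one).

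For the base case $n=1$ there is nothing to check, since by hypothesis $\widehat{\varphi} \in \widehat{\mathscr{R}}_{\Omega_\star} = \widehat{\mathscr{R}}_{\sum_{1\ast}\Omega_\star}$. For the inductive step, I would decompose $\widehat{\varphi}^{\ast(n+1)} = \widehat{\varphi}^{\ast n} \ast \widehat{\varphi}$; the induction hypothesis $\widehat{\varphi}^{\ast n} \in \widehat{\mathscr{R}}_{\sum_{n\ast}\Omega_\star}$, combined with $\widehat{\varphi} \in \widehat{\mathscr{R}}_{\Omega_\star}$, lets me invoke Theorem \ref{thmendlessED} on this pair, yielding
\[
\widehat{\varphi}^{\ast(n+1)} \in \widehat{\mathscr{R}}_{\left(\sum_{n\ast}\Omega_\star\right) \ast \Omega_\star} = \widehat{\mathscr{R}}_{\sum_{(n+1)\ast}\Omega_\star}.
\]
The equality used here is just the definition of the $(n+1)$-fold iterated fine sum, and will require nothing more than a one-line check that $\ast$ is associative on discrete filtered sets (a direct unpacking of Definition \ref{UnionFiltDis}: both sides consist of points of the form $-n\omega+\omega_1+\cdots+\omega_{n+1}$ with $\omega_i \in \Omega_{L_i}$ and $\sum L_i = L$).

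To finish, I would invoke the very construction of $\Omega_\star^\infty$ as the direct limit of the $\sum_{n\ast}\Omega_\star$ under the canonical injections already mentioned in the paper: for every $n$ and every $L>0$, one has $\left(\sum_{n\ast}\Omega_\star\right)_L \subseteq (\Omega_\star^\infty)_L$. Hence every $\Omega_\star^\infty$-allowed path is in particular $\sum_{n\ast}\Omega_\star$-allowed, and the analytic continuation of $\widehat{\varphi}^{\ast n}$ along the latter class of paths (provided by the inductive step) automatically furnishes an analytic continuation along every $\Omega_\star^\infty$-allowed path. This gives the desired $\widehat{\varphi}^{\ast n} \in \widehat{\mathscr{R}}_{\Omega_\star^\infty}$.

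The only genuinely non-trivial ingredient is Theorem \ref{thmendlessED} itself; the remaining obstacle, associativity of the fine sum, is purely combinatorial. I therefore expect the proof to be very short, with essentially no analytic content beyond what has already been packaged in the main theorem.
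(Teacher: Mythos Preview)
Your approach is correct and is exactly what the paper has in mind: the corollary is stated immediately after Theorem~\ref{thmendlessED} with the remark that it is ``an obvious but interesting corollary'', and no separate proof is given. The induction you outline, together with the monotonicity $\left(\sum_{n\ast}\Omega_\star\right)_L \subseteq (\Omega_\star^\infty)_L$, is the intended argument.
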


Theorem \ref{thmendlessED} is given in \cite{CNP1} and proved
there up to sometimes alluded-to key-points arguments.
The rest of this section is devoted to showing  this result rigorously. Our method 
differs from that of \cite{CNP1}. \\
Up to making a translation, one can suppose that $\omega=0$ and this is what we do in
the sequel. For ${\widehat{\varphi} \in
\widehat{\mathscr{R}}_{\Omega_\star}}$ and $\widehat{\psi} \in
\widehat{\mathscr{R}}_{\Omega_\star^\prime}$, notice that the
convolution product (\ref{conprod1}) provides a holomorphic function
on ${D(0, \rho_{\Omega_\star}(0)) \cap D(0,
  \rho_{\Omega_\star^\prime}(0))}$ since $\widehat{\varphi}$,
\underline{resp}. $\widehat{\psi}$ can be represented by a holomorphic
function on $D(0, \rho_{\Omega_\star}(0))$, \underline{resp}. $D(0,
  \rho_{\Omega_\star^\prime}(0))$.

\subsection{$(\Omega_\star \ast \Omega_\star^\prime)$-homotopy}

The following definition generalizes a definition from \cite{S012,
  S014}.

\begin{defi}
Let $\Omega_\star$, $\Omega_\star^\prime$ be two discrete filtered
sets centred at $0$,
$(\mathscr{R}_{\Omega_\star}, \mathfrak{p})$,
$(\mathscr{R}_{\Omega_\star^\prime}, \mathfrak{p}^\prime)$ their
associated Riemann surfaces. Let  
${H : (s,t) \in [0,1]^2 \mapsto H(s,t)=H_t(s) \in
\mathbb{C}}$ be a continuous map and
$H^\star : (s,t) \in [0,1]^2 \mapsto H^\star(s,t)=H_t^\star(s)  \in
\mathbb{C}$ the continuous map deduced from $H$ through the
identity\footnote{Remember that $H_t^{-1}(s)=H_t(1-s)$.}
 $H_t^\star(s) = H_t(1)-H_t^{-1}(s)$. One says that $H$ is 
 a \textbf{$(\Omega_\star \ast \Omega_\star^\prime)$-homotopy} if the following
 conditions are satisfied for every $t \in [0,1]$:
\begin{itemize}
\item $H_t(0) = 0$;
\item $H_t$ can be lifted with respect to $\mathfrak{p}$ on 
$\mathscr{R}_{\Omega_\star}$ from $0 = \mathfrak{p}^{-1}(0)$;
\item $H_t^\star$ can be lifted with respect to $\mathfrak{p}^\prime$ on 
$\mathscr{R}_{\Omega_\star^\prime}$ from $0 = {\mathfrak{p}^\prime}{^{-1}}(0)$;
\end{itemize}
The path $H_0$ is the initial path, $H_1$ is the final and the path $t
\in [0,1] \mapsto H_t(1)$ is the endpoint path of 
$H$.
\end{defi}

\subsection{Usefull lemmas}

We start with a technical lemma.

\begin{lemm}\label{techlem}
Let $H$ be a $(\Omega_\star \ast \Omega_\star^\prime)$-homotopy. Then 
$\inf_{t \in [0,1]} d(H_t, \Omega_\star) >0$ and $\inf_{t \in [0,1]}
d(H_t^\star, \Omega_\star^\prime) >0$. 
\end{lemm}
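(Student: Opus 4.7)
The plan is to promote the fibrewise lifting hypothesis into a joint continuous lift on the parameter square $[0,1]^2$, and then to conclude by compactness; the same strategy is then applied to $H^\star$.

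First, for each $t$ the hypothesis provides the unique lift $\widetilde{H}_t : [0,1] \to \mathscr{R}_{\Omega_\star}$ of $H_t$ starting from $0 = \mathfrak{p}^{-1}(0)$, uniqueness coming from the étalé structure of $(\mathscr{R}_{\Omega_\star},\mathfrak{p})$ (Proposition \ref{PFS-RS}). I want to show that the resulting map $\widetilde{H}:(s,t) \mapsto \widetilde{H}_t(s)$ is jointly continuous. To do this I fix $t_0 \in [0,1]$, cover the compact curve $\widetilde{H}_{t_0}([0,1])$ by finitely many basis neighbourhoods $\mathscr{U}_1,\ldots,\mathscr{U}_N \in \mathscr{B}$ on each of which $\mathfrak{p}$ is a homeomorphism onto an open disc $\stackrel{\bullet}{\mathscr{U}}_i$, and select a subdivision $0 = s_0 < s_1 < \cdots < s_K = 1$ such that each sub-arc $\widetilde{H}_{t_0}([s_{k-1},s_k])$ is contained in some $\mathscr{U}_{i_k}$. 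By uniform continuity of $H$ on $[0,1]^2$ one obtains a neighbourhood $J$ of $t_0$ for which $H_t([s_{k-1},s_k]) \subset \stackrel{\bullet}{\mathscr{U}}_{i_k}$ whenever $t \in J$ and for every $k$. Piecewise gluing of the local inverses $(\mathfrak{p}|_{\mathscr{U}_{i_k}})^{-1} \circ H_t|_{[s_{k-1},s_k]}$ then yields a continuous lift of $H|_{[0,1]\times J}$ starting from $0$, which by uniqueness of path lifting must coincide with $\widetilde{H}$ on $[0,1]\times J$. Hence $\widetilde{H}$ is continuous everywhere on $[0,1]^2$.

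Once $\widetilde{H}$ is known to be continuous, its image is a compact subset of $\mathscr{R}_{\Omega_\star}$, and since $\rho_{\Omega_\star}$ is continuous and strictly positive on $\mathscr{R}_{\Omega_\star}$ (as noted just before Definition \ref{dpathomega}), one concludes
$$
\inf_{t \in [0,1]} d(H_t,\Omega_\star) \;=\; \inf_{(s,t) \in [0,1]^2} \rho_{\Omega_\star}\!\bigl(\widetilde{H}(s,t)\bigr) \;>\; 0.
$$

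For the second inequality, the map $(s,t) \mapsto H_t^\star(s) = H_t(1) - H_t(1-s)$ is continuous on $[0,1]^2$ since $H$ is, and each slice $H_t^\star$ lifts from $0$ on $\mathscr{R}_{\Omega_\star^\prime}$ by assumption. Applying the two-step argument above verbatim to $(H^\star, \mathscr{R}_{\Omega_\star^\prime}, \mathfrak{p}^\prime)$ gives $\inf_t d(H_t^\star,\Omega_\star^\prime) > 0$. The one point that is not automatic, and which I regard as the main obstacle, is the joint continuity of $\widetilde{H}$ in the first step: it is where one must leave the fibrewise viewpoint and combine the étalé local-homeomorphism structure with the uniform continuity of $H$ on the compact square, via a finite subcover of the already-lifted path at $t = t_0$. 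Everything else reduces to continuity of $\rho_{\Omega_\star}$ and compactness of $[0,1]^2$.
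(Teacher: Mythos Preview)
Your argument is correct and follows the same route as the paper's proof: lift the whole map $H$ to $\mathscr{R}_{\Omega_\star}$, then use continuity of $\rho_{\Omega_\star}$ and compactness of $[0,1]^2$. The only difference is that the paper dispatches the joint continuity of the lift in one line by invoking ``the homotopy lifting theorem'', whereas you supply that argument by hand via a finite cover of $\widetilde{H}_{t_0}([0,1])$ by local sections and uniform continuity of $H$; this is exactly the standard proof of that lifting property, adapted to an \'etal\'e space under the hypothesis that each fibre lifts.
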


\begin{proof}
Let $(\mathscr{R}_{\Omega_\star}, \mathfrak{p})$ be the Riemann
surface associated with $\Omega_\star$. Since $H_t$ can be lifted 
with respect to $\mathfrak{p}$ on $\mathscr{R}_{\Omega_\star}$ from $0$ for every
$t \in [0,1]$ and using the homotopy lifting theorem, the ${(\Omega_\star \ast
  \Omega_\star^\prime)}$-homotopy   $H$ can be lifted with respect to
$\mathfrak{p}$ into a (unique) homotopy $\mathcal{H} :  (s,t) \in [0,1]^2
\mapsto  \mathcal{H}(s,t)=\mathcal{H}_t(s)$ such that
$\mathcal{H}_t(0) = 0$ for every $t \in [0,1]$.
 Since
the mapping $\zeta \in \mathscr{R}_{\Omega_\star} \mapsto d(\zeta,
\Omega_\star)$ is continuous, one concludes that $\inf_{(s,t) \in
  [0,1]^2}d(\mathcal{H}(s,t), \Omega_\star) >0$ by compactness. Thus
$\inf_{t \in [0,1]} d(H_t, \Omega_\star) >0$. The same reasoning holds
for $\inf_{t \in [0,1]} d(H_t^\star, \Omega_\star^\prime)$.
\end{proof}

\begin{lemm}\label{mainlem1forusuconv}
Let $\Omega_\star$, $\Omega_\star^\prime$ be two discrete filtered
sets centred at $0$ and  $\gamma$ be a piecewise $\mathcal{C}^1$
path such that $|\underline{\gamma}(0)| <
  \min \{\rho_{\Omega_\star}(0) , \rho_{\Omega_\star^\prime}(0)
  \}$. We suppose the existence of a 
${(\Omega_\star \ast \Omega_\star^\prime)}$-homotopy  $H$ whose endpoint
path is $\underline{\gamma}$ and such that ${H_0([0,1]) \subset
D(0, \rho_{\Omega_\star}(0))}$ and $H_0^\star([0,1]) \subset
D(0,\rho_{\Omega_\star^\prime}(0))$. Then, for any $\widehat{\varphi} \in
\widehat{\mathscr{R}}_{\Omega_\star}$ and any $\widehat{\psi} \in
\widehat{\mathscr{R}}_{\Omega_\star^\prime}$, their convolution product
 $\widehat{\varphi}
  \ast \widehat{\psi}$ can be analytically continued along $\gamma$.
\end{lemm}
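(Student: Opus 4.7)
The plan is to follow the philosophy of \cite{OU010,S014} and use the homotopy $H$ to deform the integration contour in (\ref{conprod1}). For each $t\in[0,1]$, I would set
$$F(t)\;=\;\int_0^1 \Phi_t(s)\,\Psi_t(s)\,\partial_s H_t(s)\,ds,$$
where $\Phi_t(s)$ denotes the value, at parameter $s$, of the analytic continuation of $\widehat{\varphi}$ along the lift of $H_t$ to $\mathscr{R}_{\Omega_\star}$ from $0$, and $\Psi_t(s)$ denotes the value, at parameter $1-s$, of the analytic continuation of $\widehat{\psi}$ along the lift of $H_t^\star$ to $\mathscr{R}_{\Omega_\star^\prime}$ from $0$ (using the identity $H_t(1)-H_t(s)=H_t^\star(1-s)$). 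These lifts exist by the very definition of a $(\Omega_\star\ast\Omega_\star^\prime)$-homotopy, and by Lemma \ref{techlem} one has uniform positive distances $\inf_t d(H_t,\Omega_\star)>0$ and $\inf_t d(H_t^\star,\Omega_\star^\prime)>0$, so $\Phi_t$ and $\Psi_t$ are continuous and bounded on $[0,1]^2$ and $F$ is continuous on $[0,1]$.

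The first step would be to identify $F(0)$ with the initial value of $\widehat{\varphi}\ast\widehat{\psi}$ at $\gamma(0)=H_0(1)$. Since $H_0([0,1])\subset D(0,\rho_{\Omega_\star}(0))$ and $H_0^\star([0,1])\subset D(0,\rho_{\Omega_\star^\prime}(0))$, and both discs are convex with $|H_0(1)|$ strictly smaller than both radii, the integrand $\eta\mapsto\widehat{\varphi}(\eta)\widehat{\psi}(H_0(1)-\eta)$ is single-valued and holomorphic on a convex open set containing both $H_0([0,1])$ and the line segment $[0,H_0(1)]$. The Cauchy theorem applied to the linear homotopy between these two paths then gives $F(0)=\widehat{\varphi}\ast\widehat{\psi}(\gamma(0))$.

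The main step is to show that $F$ provides an analytic continuation of this germ along $\gamma$. Fix $t_0\in[0,1]$. Using the uniform distance estimates of Lemma \ref{techlem} and the uniform continuity of $H$ on $[0,1]^2$, for $t$ in a small neighborhood of $t_0$ the contour $H_t$ should be $\Omega_\star$-homotopic (with fixed origin $0$ but moving endpoint) to the concatenation of $H_{t_0}$ with the segment $[\gamma(t_0),\gamma(t)]$, while simultaneously $H_t^\star$ is $\Omega_\star^\prime$-homotopic to the analogous concatenation for $H_{t_0}^\star$. Homotopy invariance of integrals of holomorphic differential forms then shows $F(t)=G_{t_0}(\gamma(t))$, where for $\zeta$ close to $\gamma(t_0)$
$$G_{t_0}(\zeta)\;=\;\int_0^1 \widehat{\varphi}(H_{t_0}(s))\,\widehat{\psi}\bigl(\zeta-H_{t_0}(s)\bigr)\,\partial_s H_{t_0}(s)\,ds\;+\;\int_{[\gamma(t_0),\zeta]}\widehat{\varphi}(\eta')\,\widehat{\psi}(\zeta-\eta')\,d\eta',$$
each $\widehat{\psi}$-value being taken after analytic continuation along the natural short path starting from its already-continued value at $\zeta=\gamma(t_0)$, and likewise for $\widehat{\varphi}$ in the second integral. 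Differentiation under the integral sign shows that $G_{t_0}$ is holomorphic in $\zeta$ on a disc about $\gamma(t_0)$.

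Assembling these local holomorphic representatives along $\gamma$, starting from the germ identified with $F(0)$ at $\gamma(0)$, yields the analytic continuation of $\widehat{\varphi}\ast\widehat{\psi}$ along $\gamma$. The main obstacle is precisely the simultaneous homotopy-invariance argument of the third step: one must coordinate two analytic continuations on two different Riemann surfaces, $\mathscr{R}_{\Omega_\star}$ and $\mathscr{R}_{\Omega_\star^\prime}$, under a single family of contour deformations indexed by $t$. The notion of $(\Omega_\star\ast\Omega_\star^\prime)$-homotopy together with the uniform distance bounds supplied by Lemma \ref{techlem} is exactly the device that makes this coordinated control available, and the technicalities should come down to a compactness-plus-finite-open-cover argument in the spirit of Lemma \ref{RemarksurHomotopy}.
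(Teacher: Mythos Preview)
Your proposal is correct and follows essentially the same approach as the paper: the paper's own proof consists of a single sentence deferring to the analogous lemma in \cite{S012,S014} (the closed discrete case) and invoking Lemma~\ref{techlem}, and you have carried out precisely that adaptation, with the uniform positive distances from Lemma~\ref{techlem} supplying the ``room'' needed to run the contour-deformation and local-holomorphy argument of Sauzin.
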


\begin{proof}
Just adapt the proof of a similar lemma given in \cite{S012, S014} when $\Omega_\star$,
$\Omega_\star^\prime$ are closed discrete subsets of $\mathbb{C}$,
with the help of lemma \ref{techlem}.
\end{proof}

We now state the main lemma of this Sect.

\begin{lemm}[key-lemma]\label{mainlem2forusuconv}
Let $\Omega_\star$, $\Omega_\star^\prime$ be two discrete filtered
sets centred at $0$. Let $\lambda_0$ and $\gamma$ be two paths 
subject to the following conditions:
\begin{itemize}
\item  $\lambda_0$ satisfies $\underline{\lambda}_0 : s \in [0,1] \mapsto
  \underline{\lambda}_0 (s)=s\underline{\gamma} (0)$;
\item $\underline{\gamma}(0)$ satisfies $|\underline{\gamma}(0)| <
  \min \{\rho_{\Omega_\star}(0) , \rho_{\Omega_\star^\prime}(0) \}$;
\item the product path $\lambda_0 \gamma$ is 
$(\Omega_\star \ast \Omega_\star^\prime)$-allowed.
\end{itemize}
Then there exists a $(\Omega_\star \ast
\Omega_\star^\prime)$-homotopy  $H$ with endpoint path $\underline{\gamma}$ and
intial path  ${H_0 = \underline{\lambda}_0}$.
\end{lemm}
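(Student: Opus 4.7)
The plan is to construct $H$ as a $\mathcal{C}^1$-deformation of the naive concatenation. From the hypothesis, extract $L > \mathcal{L}_{\lambda_0\gamma}$ such that $\lambda_0\gamma$ avoids the fine sum $(\Omega_\star \ast \Omega_\star^\prime)_L$. Let $K_t$ be the standardized portion of $\lambda_0\gamma$ running from $0$ to $\underline{\gamma}(t)$, so that $K_0 = \underline{\lambda}_0$ and $K_t(1) = \underline{\gamma}(t)$. I would first observe that $K_t$ automatically lifts on $\mathscr{R}_{\Omega_\star}$: since $0 \in \Omega^\prime_{L_2^\prime}$ for every $L_2^\prime > 0$, the fine sum contains $\Omega_{L_1^\prime}$ for every $L_1^\prime \in (0,L)$, and consequently $\lambda_0\gamma$ (hence $K_t$) avoids $\Omega_{L_1}$ for any $L_1 \in (\mathcal{L}_{K_t}, L)$, so that $K_t \in \mathfrak{R}_{\Omega_\star}(L_1)$.

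The obstruction is that the reflected path $K_t^\star$ may fail to lift on $\mathscr{R}_{\Omega_\star^\prime}$: a point $\xi$ on $K_t$ may lie in the ``shadow'' $\underline{\gamma}(t) - \Omega^\prime_{L_2}$ without $\xi$ itself being singular for $\Omega_\star$, a case in which the fine sum condition is silent. My plan is to deform $K_t$ into $H_t$ by inserting small $\mathcal{C}^1$-bumps, taken either left or right, at each crossing of $K_t$ with this shadow. Picking levels $L_1(t), L_2(t)$ just above $\mathcal{L}_{K_t}$ with $L_1(t) + L_2(t) \leq L$ when possible, the fine sum applied to the endpoint $\underline{\gamma}(t)$ forces $\Omega_{L_1(t)}$ and $\underline{\gamma}(t) - \Omega^\prime_{L_2(t)}$ to be disjoint finite sets, so the bumps can be inserted around shadow points without colliding with $\Omega_\star$-singularities; the bump radii are kept strictly smaller than the positive distance from $K_t$ to $\Omega_{L_1(t)}$, which preserves $\Omega_\star$-allowedness.

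The main obstacle is continuity of $H_t$ in $t$: as $t$ varies, the moving shadow $\underline{\gamma}(t) - \Omega^\prime_{L_2(t)}$ sweeps through the plane and its intersections with $K_t$ appear, disappear, or slide along $K_t$. I would partition $[0,1]$ into finitely many subintervals over which the combinatorial pattern of crossings is stable (available by compactness and discreteness of the shadow), fix a left/right choice for each bump on each subinterval, and let the bump radii interpolate continuously, shrinking to zero at subinterval boundaries where a crossing is born or dies. The fine sum hypothesis is precisely what prevents a shadow point from passing through an $\Omega_\star$-singularity during this process, so the left/right assignments can always be extended coherently. Once the construction is in place, the axioms of an $(\Omega_\star \ast \Omega_\star^\prime)$-homotopy follow directly: $H_t \in \mathfrak{R}_{\Omega_\star}(L_1(t))$ by the distance estimate, $H_t^\star \in \mathfrak{R}_{\Omega_\star^\prime}(L_2(t))$ by construction, and the boundary conditions $H_0 = \underline{\lambda}_0$, $H_t(1) = \underline{\gamma}(t)$ are inherited from $K_t$.
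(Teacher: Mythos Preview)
Your approach is genuinely different from the paper's, and while the intuition is sound, two of your claims do not hold as stated.

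First, the disjointness claim. You need $L_1(t), L_2(t) > \mathcal{L}_{H_t} \approx \mathcal{L}_{K_t}$ so that both $H_t \in \mathfrak{R}_{\Omega_\star}(L_1(t))$ and $H_t^\star \in \mathfrak{R}_{\Omega_\star'}(L_2(t))$; but $\mathcal{L}_{K_t}$ can be arbitrarily close to $\mathcal{L}_{\lambda_0\gamma}$, which is only bounded by $L$, not $L/2$. When $L_1(t)+L_2(t) > L$ the fine sum hypothesis on $\underline{\gamma}(t)$ says nothing about whether $\Omega_{L_1(t)}$ and $\underline{\gamma}(t)-\Omega'_{L_2(t)}$ are disjoint. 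Your separate distance estimate (bump radii $< d(K_t,\Omega_{L_1(t)})$) does rescue the $\Omega_\star$-allowedness of $H_t$, so this defect is not fatal to the fixed-$t$ construction; but your later claim that ``the fine sum hypothesis is precisely what prevents a shadow point from passing through an $\Omega_\star$-singularity'' fails for the same reason.

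The genuine gap is continuity. Shrinking bump radii to zero at birth/death moments cannot work: at the instant $t_0$ where a shadow point $\underline{\gamma}(t_0)-\omega'$ actually lies on $K_{t_0}$, a positive-radius detour is mandatory, so a zero-radius bump there lets $H_{t_0}^\star$ hit $\omega'$. More fundamentally, when a shadow point crosses the image of $K_t$ transversally (the generic event), the natural side for the detour flips, and no radius interpolation on a combinatorial partition absorbs that flip continuously.

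The paper avoids all of this by transporting $\underline{\lambda}_0$ under the flow of the non\-autonomous vector field
\[
X(\zeta,t)=\frac{\eta_{\Omega_L}(\zeta)}{\eta_{\Omega_L}(\zeta)+\eta_{\Omega_L'}(\underline{\gamma}(t)-\zeta)}\,\underline{\gamma}'(t),
\]
which gives $\mathcal{C}^1$-continuity in $t$ for free. Crucially, the paper does \emph{not} try to make $H_t$ itself $\Omega_\star$-allowed at a single global level; instead it lifts along the flow lines $F^s = H_0|_{[0,s]}H^s$ and uses the exact length identity $\mathcal{L}_{F^s}+\mathcal{L}_{F^{\star(1-s)}}=\mathcal{L}_{\lambda_0\gamma}<L$, which yields a \emph{pointwise} splitting $L_1+L_2\le L$ and makes the fine sum hypothesis bite exactly where needed. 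This position-dependent use of the levels is the idea your global choice of $L_1(t),L_2(t)$ misses.
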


\begin{proof}
Part of our arguments comes from \cite{OU012, Del014}. We also use a
construction made in \cite{S012, S014} for the case where $\Omega_\star$,
$\Omega_\star^\prime$ are closed discrete subsets of $\mathbb{C}$,
and that simplifies the proof. The later is new up to our knowledge.

We first assume that the product path $\lambda_0 \gamma$ is 
$(\Omega_\star+\Omega_\star^\prime)$-allowed. Therefore, there exists
$L>0$ such that $\lambda_0 \gamma \in \mathfrak{R}_{\Omega_\star+
\Omega_\star^\prime}(L)$. In particular, $\mathcal{L}_{\lambda_0
\gamma} <L$ and $\gamma$ avoids the
set $(\Omega_\star + \Omega_\star^\prime)_L$,
$$(\Omega_\star + \Omega_\star^\prime)_L =  \{\zeta =
\omega+\omega^\prime \mid \omega \in \Omega_L, \, \omega^\prime \in
\Omega^\prime_L \mbox{ and } |\zeta| <L \}.$$
Making a slight deformation of $\gamma$ (an homotopy in
$\mathbb{C} \setminus (\Omega_\star + \Omega_\star^\prime)_L$ with
fixed extremities), we can assume that $\gamma$ is
$\mathcal{C}^1$. (There is no loss of generality with this assumption).

We pick two functions $\eta_{\Omega_L} : \mathbb{C} \to \mathbb{R}^+$ and 
$\eta_{\Omega_L^\prime} : \mathbb{C} \to  \mathbb{R}^+$, both
continuous and locally Lipschitz,  which furthermore satisfy: 
$$\{\zeta \in \mathbb{C} \mid
\eta_{\Omega_L}(\zeta)=0\} = \Omega_L, \hspace{2mm} \{\zeta \in \mathbb{C} \mid
\eta_{\Omega_L^\prime}(\zeta)=0\} = \Omega_L^\prime .$$
(For instance, $\eta_{\Omega_L}(\zeta) = d(\zeta,\Omega_L)$ where $d$
is the euclidean distance).  Remark that the mapping 
$\chi : (\zeta,t) \in \mathbb{C} \times [0,1] \mapsto
\eta_{\Omega_L}(\zeta)+\eta_{\Omega_L^\prime}(\underline{\gamma}(t)-
\zeta) \in  \mathbb{R}^+$ never vanishes : $\chi(\zeta,t)=0$ means 
$\zeta=\omega$ and $\gamma(t)-\zeta = \omega^\prime$ for some 
$\omega \in  \Omega_L$ and $\omega^\prime \in \Omega_L^\prime$, and
this implies $\gamma(t) = \omega + \omega^\prime$ which contradicts
the hypotheses made on 
$\lambda_0 \gamma$. This implies that the following non-autonomous vector
field,
$$X : (\zeta,t) \in \mathbb{C} \times [0,1] \mapsto X(\zeta,t) =
\frac{\eta_{\Omega_L}(\zeta)}{\eta_{\Omega_L}(\zeta)+\eta_{\Omega_L^\prime}(\underline{\gamma}(t)- 
  \zeta)}\underline{\gamma}^\prime (t)$$ 
is well-defined, continuous, everywhere locally Lipschitz with respect to
$\zeta$   and  bounded, $|X(\zeta,t)| \leq
|\underline{\gamma}^\prime (t)| \leq
\max_{[0,1]}|\underline{\gamma}^\prime|$. Therefore, its associated flow  ${ g_X :
  (t_0,t,\zeta) \in [0,1]^2\times 
\mathbb{C} \mapsto  g_X^{t_0,t}(\zeta) \in \mathbb{C} }$ is
$\mathcal{C}^1$ and globally defined as a consequence of the
Cauchy-Lipschitz theorem and the Gr\"onwall lemma.

We start with $H_0 = \underline{\lambda}_0$ and for every $t \in
[0,1]$, we consider the deformation $H_t$ of
$H_0$ along the flow $X$, precisely we set
$H_t = g_X^{0,t}(H_0)$. We get a mapping $H : (s,t)\in
[0,1]^2 \mapsto H(s,t) = H_t(s)$ with the following properties for
every $t \in [0,1]$ (check them or see \cite{S012, S014}):
\begin{itemize}
\item $H$ is of class $\mathcal{C}^1$;
\item $H_t(0)=0$ and $H_t(]0,1]) \in \mathbb{C} \setminus \Omega_L$;
\item $H_0 = \underline{\lambda}_0$ and 
the endpoint path $t \in [0,1] \mapsto H_t(1)$ coincides with
  the path $\underline{\gamma}$.
\end{itemize}
Let us now consider the family of paths $H^s : t \in [0,1] \mapsto
H^s(t)=H(s,t)$, for $s \in [0,1]$. These paths satisfy the following
properties. For every $s \in [0,1]$:
\begin{itemize}
\item $H^s$ is of $\mathcal{C}^1$-class, $H^s(0)= \underline{\lambda}_0(s)=H_0(s)$;
\item $\displaystyle \frac{dH^s(t)}{dt} = X\big(H^s(t),t \big)$, thus
  $\left|\frac{dH^s(t)}{dt}\right| \leq
  |\underline{\gamma}^\prime(t)|$ and this
  implies that $\mathcal{L}_{H^s} \leq \mathcal{L}_{\gamma}$;
\item  $H^0 \equiv 0$ and $H^s([0,1]) \subset \mathbb{C}  \setminus \Omega_L$ for $s \neq 0$;
\end{itemize}
The product of  paths $F^s = H_0|_{[0,s]} H^s$ is well-defined and has the following
properties, for any $s \in ]0,1]$: 
\begin{enumerate}
\item $\underline{F}^s$ is piecewise $\mathcal{C}^1$;
\item $\underline{F}^0 \equiv 0$ otherwise for $s>0$, 
 $\underline{F}^s(0)=0$ and $\underline{F}^s(]0,1]) \subset \mathbb{C}  \setminus \Omega_L$;
\item $\mathcal{L}_{F^s} =
  \mathcal{L}_{H_0|_{[0,s]}} + \mathcal{L}_{H^s}$, hence $\mathcal{L}_{F^s} 
  \leq \mathcal{L}_{\lambda_0 \gamma} <L$;
\end{enumerate}
Therefore for any $s \in ]0,1]$, $F^s$ belongs to
$\mathfrak{R}_{\Omega_\star}(L)$, thus is $\Omega_\star$-allowed and
can be lifted with respect 
to $\mathfrak{p}$ from $0$. This implies that $H_s$ can be lifted from
$\mathrm{cl}(\underline{\lambda}_0|_{[0,s]})$ with respect
to $\mathfrak{p}$ and this eventually provides a lifting $\mathcal{H}$ of
the mapping $H$.  One concludes that for every $t \in [0,1]$, the path
$H_t$ has a (unique) lifting $\mathcal{H}_t$ with respect to
$\mathfrak{p}$ from~$0$.

Look at the mapping $H^\star : (s,t) \in [0,1]^2 \mapsto
H^\star(s,t)=H_t^\star(s)$ deduced from $H$ by $H_t^\star(s) =
H_t(1)-H_t^{-1}(s)$. It is easy to see that the family of paths
$H_t^\star = g_{X^\star}^{0,t}(H_0^\star)$ is obtained by deformation  of $H_0^\star$, where
$g{_{X^\star} : (t_0,t,\zeta) \in [0,1]^2 \times \mathbb{C} \mapsto
  g_{X^\star}^{t_0,t}(\zeta)}$ is 
the flow associated with the non-autonomous vector
field,
$$X^\star : (\zeta,t) \in \mathbb{C} \times [0,1] \mapsto X(\zeta,t) =
\frac{\eta_{\Omega_L^\prime}(\zeta)}{\eta_{\Omega_L^\prime}(\zeta)+\eta_{\Omega_L}(\underline{\gamma}(t)-
  \zeta)}\underline{\gamma}^\prime (t).$$ 
The above reasoning can be applied as it stands for $H^\star$ and
$H_t^\star$ has a (unique) lifting $\mathcal{H}_t^\star$ with respect to 
$\mathfrak{p}^\prime$ from~$0$. 

We set ${H^\star}{^{s}}(t)=H^\star(s,t)$, thus ${H^\star}{^{(1-s)}}(t) = \gamma(t)-H^s(t)$.
From the identities $\displaystyle \frac{dH^s(t)}{dt} = X\big(H^s(t),t
\big)$ and
$\displaystyle \frac{d{H^\star}{^{(1-s)}}(t)}{dt} =
X_\star\big({H^\star}{^{(1-s)}}(t),t \big)$,
one easily gets~: $\displaystyle \left| \frac{dH^s(t)}{dt}\right|+
\left|
  \frac{{H^\star}{^{(1-s)}}(t)}{dt}\right|=|\underline{\gamma}^\prime(t)|$. The
 path ${H^\star}{^{(1-s)}}$ starts from the point ${{H^\star}{^{(1-s)}}(0)=
\underline{\lambda}_0(1-s)=H_0(1-s)}$, thus the product of paths 
${F^\star}{^{(1-s)}} = H_0|_{[0,1-s]} {H^\star}{^{(1-s)}}$ is
well-defined and
$$\mathcal{L}_{F^s}+\mathcal{L}_{{F^\star}{^{(1-s)}}}=\mathcal{L}_{\lambda_0
  \gamma} <L.$$
The upshot is that for any $s \in ]0,1[$, $F^s$ belongs to
$\mathfrak{R}_{\Omega_\star}(L_1)$ and ${F^\star}{^{(1-s)}}$ belongs to
$\mathfrak{R}_{\Omega_\star^\prime}(L_2)$ with $L_1+L_2 \leq L$. 
Therefore, only the points of the form $(\Omega_\star \ast
\Omega_\star^\prime)_L$ actually matter
for $\gamma$ to get the homotopies $H$ and $H^\star$. This
property allows to extend the above construction when 
the product path $\lambda_0 \gamma$ is 
$(\Omega_\star \ast \Omega_\star^\prime)$-allowed. Indeed, denote by
$K_L \subset \mathbb{C}\times [0,1]$
the subset made of the $(\zeta,t) \in \mathbb{C}\times [0,1]$ such
that $\zeta=\omega$, $\gamma(t)-\zeta = \omega^\prime$,
$\omega+\omega^\prime \in (\Omega_\star + \Omega_\star^\prime)_L
\setminus (\Omega_\star \ast
\Omega_\star^\prime)_L$. It is sufficient to remark that 
the restriction  $X|$ of $X$ to $\mathbb{C}\times
[0,1] \setminus K_L$ is still continuous, locally Lipschitz  and bounded, and the above
arguments show that the deformation $H_t = g_{X|}^{0,t}(H_0)$ of
$H_0$ along the flow $X|$ can be defined as well, and similarly for $H_t^\star$.
This ends the proof of the lemma.
\end{proof}

\subsection{The proof of theorem \ref{thmendlessED}}

Theorem \ref{thmendlessED} is a straightforward consequence  of lemma
\ref{mainlem1forusuconv} and lemma
\ref{mainlem2forusuconv}.

\subsection{Convolution product and  glimpsed  points}\label{submainthmcor}

The ideas developed in the proof of theorem \ref{thmendlessED}  can be easily adapted
to get the following informations on glimpsed points where, to
simplify, we only consider discrete filtered sets centred at~$0$.

\begin{prop}\label{mainthmglimpsed}
Let $\Omega_\star$, $\Omega_\star^\prime$ be two  discrete
filtered sets centred at $0$ and
$\mathrm{GLIMP}_{\Omega_\star}(\theta)$,
$\mathrm{GLIMP}_{\Omega_\star^\prime}(\theta)$ their
    respective completed sets of glimpsed points, for a given direction $\theta
    \in \mathbb{S}^1$. For any two endlessly continuable germs 
 $\widehat{\varphi} \in \widehat{\mathscr{R}}_{\Omega_\star}$ and $\widehat{\psi} \in
\widehat{\mathscr{R}}_{\Omega_\star^\prime}$, the set 
$\mathrm{GLIMP}_{\widehat{\varphi}\ast \widehat{\psi}}^\star(\theta)$
of  glimpsed singular points in the direction  $\theta \in \mathbb{S}^1$ for the
convolution product   $\widehat{\varphi}  \ast \widehat{\psi}$,
satisfies the condition : $\mathrm{GLIMP}_{\widehat{\varphi}\ast
  \widehat{\psi}}^\star(\theta) \subseteq
\{\mathrm{GLIMP}_{\Omega_\star}(\theta)+\mathrm{GLIMP}_{\Omega_\star^\prime}(\theta)\}
\setminus \{0\}$.
\end{prop}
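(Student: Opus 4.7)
The plan is to reduce the statement to Lemma \ref{mainlem1forusuconv} combined with a variant of the flow construction of Lemma \ref{mainlem2forusuconv}, in which the full filtered sets $\Omega_L$ and $\Omega_L^\prime$ are replaced by their glimpsed subsets in direction $\theta$. By the characterization of $\mathrm{GLIMP}_{\widehat{\varphi}\ast\widehat{\psi}}^\star(\theta)$ supplied by Proposition \ref{propdefseenglgerms}, it is enough to show that $\widehat{\varphi}\ast\widehat{\psi}$ can be analytically continued along every piecewise $\mathcal{C}^1$ path $\gamma$ that closely follows the half-line $[0,e^{i\theta}\infty[$ in the forward direction and circumvents (to the right or to the left) each point of $\bigl(\mathrm{GLIMP}_{\Omega_\star}(\theta)+\mathrm{GLIMP}_{\Omega_\star^\prime}(\theta)\bigr)\setminus\{0\}$. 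I therefore focus on constructing a $(\Omega_\star \ast \Omega_\star^\prime)$-homotopy $H$ with initial path the radial segment $\underline{\lambda}_0:s\mapsto s\underline{\gamma}(0)$ inside $D(0,\min\{\rho_{\Omega_\star}(0),\rho_{\Omega_\star^\prime}(0)\})$ and endpoint path $\underline{\gamma}$, after which Lemma \ref{mainlem1forusuconv} concludes.

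To construct $H$, I would mimic the flow argument of Lemma \ref{mainlem2forusuconv}, picking $L>\mathcal{L}_{\lambda_0\gamma}$ and replacing the weighting functions by continuous and locally Lipschitz maps $\eta_G,\eta_{G^\prime}:\mathbb{C}\to\mathbb{R}^+$ vanishing exactly on the completed glimpsed sets $\mathrm{GLIMP}_{\Omega_\star}(\theta,L)$ and $\mathrm{GLIMP}_{\Omega_\star^\prime}(\theta,L)$ respectively (both containing $0$, which is crucial to get $H^0\equiv 0$ and $H^1=\underline{\gamma}$). The non-autonomous vector field
$$X(\zeta,t)=\frac{\eta_G(\zeta)}{\eta_G(\zeta)+\eta_{G^\prime}(\underline{\gamma}(t)-\zeta)}\,\underline{\gamma}^\prime(t)$$
is then well-defined, bounded and locally Lipschitz in $\zeta$, precisely because the hypothesis on $\gamma$ forbids $\underline{\gamma}(t)=\omega+\omega^\prime$ with $\omega\in\mathrm{GLIMP}_{\Omega_\star}(\theta,L)$, $\omega^\prime\in\mathrm{GLIMP}_{\Omega_\star^\prime}(\theta,L)$. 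Setting $H_t=g_X^{0,t}(\underline{\lambda}_0)$ and defining $H_t^\star$ symmetrically from the exchanged vector field $X^\star$, the concatenations $F^s=H_0|_{[0,s]}H^s$ and $F^{\star(1-s)}=H_0|_{[0,1-s]}H^{\star(1-s)}$ should belong to $\mathfrak{R}(\Omega_\star,\theta,L)$ and $\mathfrak{R}(\Omega_\star^\prime,\theta,L)$ respectively, and avoid the relevant glimpsed sets by the equilibrium property of $X,X^\star$ together with Lipschitz uniqueness for trajectories started away from those equilibria. Proposition \ref{propglimp} would then lift $F^s$ on $\mathscr{R}_{\Omega_\star}$ and $F^{\star(1-s)}$ on $\mathscr{R}_{\Omega_\star^\prime}$, yielding the desired liftings of $H_t$ and $H_t^\star$.

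The hard part will be twofold. First, I have to check that each $F^s$ and $F^{\star(1-s)}$ is genuinely an allowed path in the sense of Definition \ref{Allpaththeta}: this requires the aperture $\alpha$ bounding the cone of directions of $\underline{\gamma}^\prime$ to be chosen simultaneously as a $\Omega_\star(\theta,L)$- and a $\Omega_\star^\prime(\theta,L)$-angle (possible since $\Omega_L,\Omega_L^\prime$ are finite), the derivative $\sigma(s,t)\underline{\gamma}^\prime(t)$ of $H^s$ to stay non-vanishing (which follows because Lipschitz uniqueness prevents the trajectory from reaching the zeroes of $\eta_G$ in finite time), and the whole trajectory to remain in the open sector with vertex at $0$, aperture $\alpha$ and axis $\theta$ (which follows because the argument of $X$ stays in $]\theta-\alpha,\theta+\alpha[$ along the flow). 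Second, I have to accommodate the possibility that $\gamma$ encounters points $\omega+\omega^\prime\in(\Omega_\star+\Omega_\star^\prime)_L$ that are not sums of glimpsed points: this is handled by restricting $X$ to the complement in $\mathbb{C}\times[0,1]$ of the compact exceptional set collecting such pairs, where the vector field remains continuous, bounded and locally Lipschitz, exactly as in the closing step of the proof of Lemma \ref{mainlem2forusuconv}.
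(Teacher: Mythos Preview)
Your proposal is correct and follows essentially the same route as the paper: reduce to Lemma \ref{mainlem1forusuconv}, rerun the flow construction of Lemma \ref{mainlem2forusuconv} with $\Omega_L$, $\Omega_L^\prime$ replaced by the completed glimpsed sets, observe that $\arg\,\frac{dH^s}{dt}\in\, ]\theta-\alpha,\theta+\alpha[$ so that each $F^s$ lies in $\mathfrak{R}(\Omega_\star,\theta,L)$ and avoids $\mathrm{GLIMP}_{\Omega_\star}^\star(\theta,L)$, then invoke Proposition \ref{propglimp} to lift, and symmetrically for $H^\star$.

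One small point: your ``second hard part'' is superfluous. Since your weighting functions $\eta_G$, $\eta_{G^\prime}$ vanish \emph{only} on the glimpsed sets, the denominator $\eta_G(\zeta)+\eta_{G^\prime}(\underline{\gamma}(t)-\zeta)$ can vanish only when $\underline{\gamma}(t)$ is a sum of two glimpsed points, which is excluded by hypothesis. Hence $X$ is already globally defined on $\mathbb{C}\times[0,1]$ and no restriction to the complement of an exceptional set is needed; that step in Lemma \ref{mainlem2forusuconv} was there solely to pass from the sum $\Omega_\star+\Omega_\star^\prime$ to the fine sum $\Omega_\star\ast\Omega_\star^\prime$, a distinction which does not arise here. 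The paper's own proof accordingly omits this step.
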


\begin{proof}
It is sufficient to consider product paths $\lambda_0 \gamma$ of the
following form:
\begin{itemize}
\item  $\lambda_0$ satisfies $\underline{\lambda}_0 : s \in [0,1] \mapsto
  \underline{\lambda}_0 (s)=s\underline{\gamma} (0)$ and
  $\underline{\gamma}(0)$ satisfies the conditions :
  $\underline{\gamma}(0) \in  ]0, e^{i\theta}\infty[$ and
$|\underline{\gamma}(0)| < \min \{\rho_{\Omega_\star}(0) , \rho_{\Omega_\star^\prime}(0)\}$;
\item $\gamma$  avoids the set 
$\{\mathrm{GLIMP}_{\Omega_\star}(\theta)+\mathrm{GLIMP}_{\Omega_\star^\prime}(\theta)\}
\setminus \{0\}$;
\item $\lambda_0 \gamma$ belongs to $\mathfrak{R}(\Omega_\star,
  \theta, L) \cap
  \mathfrak{R}(\Omega_\star^\prime ,\theta,L)$  for some $L>0$;
\item $\gamma$ is of class $\mathcal{C}^1$, its derivative
  $\gamma^\prime$ do not vanish and there exists $\alpha
  \in ]0,\pi/2[$ small enough such that for every $t \in [0,1]$,
$\mathrm{arg} \, \gamma^\prime(t) \in
  ]-\alpha+\theta, \theta +\alpha[$.
\end{itemize}
We go back to the proof of the key-lemma \ref{mainlem2forusuconv}
where we replace   $\Omega_L$ by
$\mathrm{GLIMP}_{\Omega_\star}(\theta)$ and $\Omega^\prime_L$ by
$\mathrm{GLIMP}_{\Omega_\star^\prime}(\theta)$. We follow the
construction of  the
mapping $H$. It is easy to see that
for every $t \in [0,1]$, $\displaystyle \mathrm{arg} \,\frac{d
  H^s(t)}{dt} \in  ]-\alpha+\theta, \theta +\alpha[$. Defining $F^s$
like in the proof of lemma \ref{mainlem2forusuconv}, the upshot is
that  $F^s$ belongs to $\mathfrak{R}(\Omega_\star,
  \theta, L)$ and avoids $\mathrm{GLIMP}_{\Omega_\star}(\theta)$, for
  any $s \in ]0,1]$. This implies that the mapping $H$ has a (unique)
  lifting $\mathcal{H}$ with respect to $\mathfrak{p}$ with
  $\mathcal{H}_t(0)=0$ for every $t \in [0,1]$. The same result occurs for the mapping
  $H^\star$. One concludes with lemma \ref{mainlem1forusuconv}.
\end{proof}

One can draw the following consequences from 
both theorem \ref{thmendlessED} and
proposition~\ref{mainthmglimpsed}, where we use classical notations in
resurgence theory for which we refer to \cite{S014, Del014} :

\begin{coro}
The space of endlessly continuable functions
$\hat{\mathscr{R}}_{\mathrm{endl}}$ makes a differential convolution
algebra (without unit) on which the alien operators act. In
particular, if $\Omega_\star$, $\Omega_\star^\prime$ are two discrete
filtered sets centred at $0$, then for any  $\widehat{\varphi} \in
\widehat{\mathscr{R}}_{\Omega_\star}$, $\widehat{\psi} \in
\widehat{\mathscr{R}}_{\Omega_\star^\prime}$ and 
any $\omega \in  \renewcommand{\arraystretch}{0.5}
\begin{array}[t]{c}
\displaystyle \mathbb{C}^1\\
{\scriptstyle \bullet}
\end{array}
\renewcommand{\arraystretch}{1}$, 
 the alien operator
$\Delta_{\omega}^+$ acts on
$\stackrel{\triangledown}{\varphi}\ast
\stackrel{\triangledown}{\psi}$ with
$\stackrel{\triangledown}{\varphi} =
\!^\flat\widehat{\varphi}$, $\stackrel{\triangledown}{\psi} =
\!^\flat\widehat{\psi}$ and the following identity holds:
\begin{equation}\label{deralienplussing}
\Delta_{\omega}^+ (\stackrel{\triangledown}{\varphi} \ast \stackrel{\triangledown}{\psi} )=
(\Delta_{\omega}^+ \stackrel{\triangledown}{\varphi} )
\ast \stackrel{\triangledown}{\psi}
+\sum_{\omegadot_1+\omegadot_2=\omegadot}\big(\Delta_{\omega_1}^+
\stackrel{\triangledown}{\varphi}  
\big)\ast\big(\Delta_{\omega_2}^+ \stackrel{\triangledown}{\psi} \big)+
\stackrel{\triangledown}{\varphi} \ast \big(\Delta_{\omega}^+
\stackrel{\triangledown}{\psi} \big).
\end{equation}
In (\ref{deralienplussing}), the sum runs over all $\omegadot_1 \in
\mathrm{GLIMP}_{\Omega_\star}^\star(\thetadot)$,  
$\omegadot_2 \in
\mathrm{GLIMP}_{\Omega_\star^\prime}^\star(\thetadot)$ with $\thetadot
= \dot{\pi}(\theta)$ and  $\theta = \mathrm{arg}(\omega)
\in \renewcommand{\arraystretch}{0.5}
\begin{array}[t]{c}
\displaystyle \mathbb{S}^1\\
{\scriptstyle \bullet}
\end{array}
\renewcommand{\arraystretch}{1}$.
\end{coro}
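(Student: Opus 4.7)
The claim splits into two parts: (a) that $\hat{\mathscr{R}}_{\mathrm{endl}}$ is a differential convolution algebra on which the alien operators act, and (b) the Leibniz-type identity (\ref{deralienplussing}). Part (a) would follow by combining Theorem \ref{thmendlessED} (stability under $\ast$) with the elementary fact that $d/d\zeta$ preserves analytic continuation along $\Omega_\star$-allowed paths, together with the classical construction of alien operators by microlocalisation on an endless Riemann surface recalled in \cite{S014, Del014}: the existence, for each endlessly continuable $\widehat{\varphi}$, of a discrete filtered set $\Omega_\star$ with $\widehat{\varphi} \in \hat{\mathscr{R}}_{\Omega_\star}$, together with the set $\mathrm{GLIMP}_{\widehat{\varphi}}^\star(\theta)$ of Proposition \ref{propdefseenglgerms}, is exactly what is needed to define $\Delta_\omega^+ \stackrel{\triangledown}{\varphi}$ as the prescribed linear combination of singular variations along paths that circumvent the intermediate glimpsed points on the right or on the left.

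For part (b), fix $\omega \neq 0$ and set $\theta = \mathrm{arg}(\omega)$. The plan is to compute both sides of (\ref{deralienplussing}) from a common contour representation of $\widehat{\varphi} \ast \widehat{\psi}$ along broken paths approaching $\omega$. By Proposition \ref{mainthmglimpsed}, ${\mathrm{GLIMP}_{\widehat{\varphi}\ast\widehat{\psi}}^\star(\theta) \subseteq \{\mathrm{GLIMP}_{\Omega_\star}(\theta)+\mathrm{GLIMP}_{\Omega_\star^\prime}(\theta)\} \setminus \{0\}}$, so $\omega$ decomposes only in finitely many ways as $\omega_1 + \omega_2$ with $\omega_j$ a glimpsed point for the respective germ. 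Using the key-lemma \ref{mainlem2forusuconv}, I would produce, for each sign-vector over the glimpsed points of $\widehat{\varphi}\ast\widehat{\psi}$ preceding $\omega$, a $(\Omega_\star \ast \Omega_\star^\prime)$-homotopy whose endpoint path closely follows $[0, \omega + e^{i\theta}\varepsilon[$ and circumvents each such point according to that sign-vector. Lemma \ref{mainlem1forusuconv} then lifts the integrand $\widehat{\varphi}(\eta)\widehat{\psi}(\zeta-\eta)$ to $\mathscr{R}_{\Omega_\star} \times \mathscr{R}_{\Omega_\star^\prime}$ along the two associated contour families $H_t$ and $H_t^\star$, so that each continuation of $\widehat{\varphi} \ast \widehat{\psi}$ past $\omega$ admits an explicit integral representation.

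The decisive step is then to form the alternated sum defining $\Delta_\omega^+$ and to analyse the difference between two sign choices by contour deformation in the $\eta$-plane. As the endpoint $\zeta$ crosses $\omega$, the lifted contour is pinched at values $\eta$ projecting on $\omega_1 \in \mathrm{GLIMP}_{\Omega_\star}(\theta)$ (where $\widehat{\varphi}$ becomes singular) and symmetrically at points where $\zeta - \eta$ projects on $\omega_2 \in \mathrm{GLIMP}_{\Omega_\star^\prime}(\theta)$ (where $\widehat{\psi}$ becomes singular). Localising near each pinch, one would split the contour into a part near $\omega_1$ in the $\eta$-variable and a complementary part near $\omega_2$ in the $\zeta - \eta$-variable, so that the integral factorises as a convolution of the corresponding singular variations. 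Summing over the $\pm$ choices and over the finitely many decompositions $\omega = \omega_1 + \omega_2$ would then yield precisely the three families on the right-hand side of (\ref{deralienplussing}): the boundary cases $\omega_1 = 0$ and $\omega_2 = 0$ producing $\stackrel{\triangledown}{\varphi} \ast (\Delta_\omega^+ \stackrel{\triangledown}{\psi})$ and $(\Delta_\omega^+ \stackrel{\triangledown}{\varphi}) \ast \stackrel{\triangledown}{\psi}$ respectively, while the interior pinches assemble into the cross sum.

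I expect the main obstacle to be the combinatorial bookkeeping of the alternated signs rather than the analytic content: the $+$ variant $\Delta_\omega^+$ weights each continuation by a specific factor depending on the number of glimpsed points circumvented on the right, and (\ref{deralienplussing}) is equivalent to Ecalle's \emph{symmetrel} factorisation of those weights across the decomposition $\omega = \omega_1 + \omega_2$. Once this combinatorial identity is checked on the finitely many sign-vectors permitted by our framework (finiteness being guaranteed by discreteness of $\mathrm{GLIMP}_{\Omega_\star}^\star(\theta) \cap D(0, |\omega|)$), the remaining argument reduces to the two preceding lemmas and a routine application of Fubini to the pinched integral representation.
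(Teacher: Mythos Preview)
The paper gives no proof of this corollary: it is stated directly as a consequence of Theorem~\ref{thmendlessED} and Proposition~\ref{mainthmglimpsed}, with the classical alien-calculus machinery simply imported from \cite{S014, Del014}. Your sketch is precisely the standard argument those references supply, so your approach is aligned with---indeed more detailed than---what the paper does.
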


\section{Conclusion}\label{Hend}

This article contributes to the resurgence theory in showing
rigorously the stability under convolution product 
of endlessly continuable functions, thus adds a
piece to the very foundation of this theory. We mention
that the notion of endless continuability used in this paper is less general that this
in \cite{CNP1} and \textit{a fortiori}  the endless continuability of
Ecalle.  We do not know whether our method could be applied to these
more general frames or not, however we know no application where such a generality
is needed. 

Since theorem  \ref{thmendlessED} brings in fine sums of discrete
filtered sets, series like $\sum_{n} a_n \widehat{\varphi}^{\ast n}$,
$a_n \in \mathbb{C}$ can be defined on the endless Riemann surface
$\mathscr{R}_{\Omega_\star^\infty}$  provided the uniform
convergence of the series
of any compact set of $\mathscr{R}_{\Omega_\star^\infty}$. Such a result is given in
\cite{CNP1} but for mistakes that have been corrected by Sauzin 
\cite{Sau013-3} for the case where $\Omega_\star$ stands for a closed
discrete subset of~$\mathbb{C}$. Considering the natural link between our
method and \cite{Sau013-3}, it is likely that Sauzin's
work can be generalized to endlessly continuable functions.

Finally, and like  mentioned in \cite{OU012, Sau013-3}, extensions of
theorem  \ref{thmendlessED} for the so-called weighted products
\cite{Ec93-1, Ec94} would be welcome so as to contribute to the knowledge on
the exact WKB analysis or coequational resurgence \cite{Ec84, DDP93,
  DDP97, DP99, Kawai-96, Kawai-004}, see also \cite{GdGvS014}. 
We hope to make some advances toward that direction in
a near future.

\backmatter


\begin{thebibliography}{BKM}

\bibitem{Aniceto012} I. Aniceto, R. Schiappa, M. Vonk, 
\textit{The resurgence of instantons in string theory.}
Commun. Number Theory Phys. \textbf{6} (2012), no. 2, 339-496. 

\bibitem{CNP1} B. Candelpergher, C. Nosmas, F. Pham, \textit{Approche de la
    r\'esurgence}. Actualit\'es math\'ematiques,  Hermann, Paris
    (1993).

\bibitem{CNP2} B. Candelpergher, C. Nosmas, F. Pham, \textit{Premiers
    pas en calcul \'etranger}. Ann. Inst. Fourier 
  (Grenoble) \textbf{43} (1993) 201-224.

\bibitem{Cost009} O.  Costin,  \textit{ Asymptotics and Borel
    summability}, 
Chapman \& Hall/CRC Monographs and Surveys in Pure and Applied
Mathematics,  141. CRC Press, Boca Raton, FL, 2009. 

\bibitem{Del014} E. Delabaere, \textit{Resurgent methods and the first
    Painlev\'e equation.} Preprint 2014, 214 pages.
http://hal.archives-ouvertes.fr/hal-01067086. Submitted.

\bibitem{DDP93} E. Delabaere, H. Dillinger, F. Pham, \textit{ R\'esurgence de
    Voros et p\'eriodes des courbes hyperelliptiques}.
Annales de l'Institut Fourier \textbf{43} (1993), no. 1, 163-199.

\bibitem{DDP97} E. Delabaere, H. Dillinger, F. Pham, \textit{ Exact semi-classical
    expansions for one dimensional quantum oscillators}.
Journal Math. Phys. \textbf{38}  (1997), 12,  6126-6184.

\bibitem{DH002} E. Delabaere, C. J. Howls, \textit{ 
Global asymptotics for multiple integrals with boundaries.}
Duke Math. J. \textbf{112} (2002),  2, 199-264.

\bibitem{DP99} E. Delabaere, F. Pham, \textit{ Resurgent methods in
    semi-classical asymptotics}. Ann. Inst. Henri
    Poincar\'e, Sect. A  \textbf{71}  (1999), no 1, 1-94.

\bibitem{Dunne014} G. Dunne, M. \"Unsal, 
\textit{Uniform WKB, multi-instantons, and resurgent trans-series.}
Phys. Rev. D \textbf{89}, 105009 (2014)

\bibitem{Ec84} J. \'Ecalle, \textit{Cinq applications des fonctions
  r\'esurgentes}. Preprint 84T 62, Orsay,  (1984).

\bibitem{Ec85} J. \'Ecalle, \textit{L'\'equation du pont et la
  classification analytique des objets locaux}.
  Publ. Math. D'Orsay, Universit\'e Paris-Sud, 1985.05 (1985).

\bibitem{Ec93-1} J. \'Ecalle, \textit{Six lectures on transseries,
  Analysable functions and the Constructive proof of Dulac's
  conjecture}. Bifurcations and periodic orbits of vector fields
(Montreal, PQ, 1992),
75-184, NATO Adv. Sci. Inst. Ser. C Math. Phys. Sci., 408, Kluwer
Acad. Publ., Dordrecht, 1993.

\bibitem{Ec94} J. \'Ecalle, \textit{Weighted products and parametric
    resurgence}. In ``Analyse alg\'ebrique des perturbations singuli\`eres
  I : M\'ethodes r\'esurgentes'', Travaux en cours, Hermann, Paris
  (1994), 7-49.

\bibitem{Kawai-96} T. Kawai, Y. Takei, \textit{ 
WKB analysis of Painlev\'e transcendents with a large parameter. I.}
Adv. Math. \textbf{118} (1996), no. 1, 1-33. 

\bibitem{Kawai-004} T. Kawai, Y. T. Koike, Y. Nishikawa, Y. Takei,
\textit{ 
On the Stokes geometry of higher order Painlev\'e equations. }
Analyse complexe, syst\`emes dynamiques, sommabilit\'e des s\'eries 
divergentes et th\'eories galoisiennes. II. 
Ast\'erisque No.  \textbf{297} (2004), 117-166. 

\bibitem{For} O. Forster, \textit{Lectures on Riemann Surfaces}.
Graduate texts in mathematics; 81, Springer, New York (1981).

\bibitem{GdGvS014} M. Garay, A. de Goursac, D. van Straten, 
\textit{Resurgent deformation quantisation.}
Ann. Physics 342 (2014), 83-102. 

\bibitem{Marino009} M. Mari$\tilde{\mathrm{n}}$o, R. Schiappa, M. Weiss, \textit{
Multi-instantons and multicuts.}
J. Math. Phys. \textbf{50} (2009), no. 5, 052301, 31 pp. 

\bibitem{OU010} Y. Ou,
\textit{On the stability by convolution product of a resurgent
  algebra}.
Ann. Fac. Sci. Toulouse Math. (6) \textbf{19} (2010), no. 3-4,
687-705.

\bibitem{OU012} Y. Ou, \textit{Sur la stabilit\'e par produit de
    convolution d'alg\`ebres de r\'esurgence.} PhD thesis,
Universit\'e d'Angers (2012). 

\bibitem{S012}  D. Sauzin, \textit{On the stability under convolution
    of resurgent functions.}  
Funkcial. Ekvac. \textbf{56} (2013), no. 3, 397-413. 

\bibitem{Sau013-3} D. Sauzin, \textit{ Nonlinear analysis with
    resurgent functions. } Preprint 2013, 30 pages.
http://hal.archives-ouvertes.fr/hal-00766749. To appear in Annales
scientifiques de
l'Ecole normale sup\'erieure.

\bibitem{S014} D. Sauzin, \textit{Introduction to 1-summability and
    the resurgence theory.} Preprint 2014, 127 pages.
http://hal.archives-ouvertes.fr/hal-00860032. Submitted.

\end{thebibliography}
\end{document}